\newcommand\BibTeX{{\rmfamily B\kern-.05em \textsc{i\kern-.025em b}\kern-.08em
T\kern-.1667em\lower.7ex\hbox{E}\kern-.125emX}}
\title{Randomized algorithms for Generalized Hermitian Eigenvalue Problems with application to computing Karhunen-Lo\`{e}ve expansion}
\author{Arvind K. Saibaba\corrauth, Peter K. Kitanidis and Jonghyun Harry Lee}
\address{Institute for Computational and Mathematical Engineering, Huang Building
475 Via Ortega, Stanford University, California-94305 }
\newtheorem{propos}{Proposition}
\newtheorem{theorem}{Theorem}
\newcommand{\norm}[2]{\lVert #1 \rVert_{#2}}
\newcommand{\normtwo}[1]{\lVert #1 \rVert_2}
\newcommand{\normB}[1]{\lVert #1 \rVert_B}
\newcommand{\define}{\stackrel{\text{def}}{=}}
\newcommand{\prior}{\Gamma_\text{prior}}
\newcommand{\noise}{\Gamma_\text{noise}}
\newcommand{\bigO}{{\cal{O}}}
\newcommand{\post}{\Gamma_\text{post}}
\newcommand{\bx}{\textbf{x}}
\begin{document}

\begin{abstract}
We describe randomized algorithms for computing the dominant eigenmodes of the Generalized Hermitian Eigenvalue Problem (GHEP) $Ax=\lambda Bx$, with $A$  Hermitian and $B$ Hermitian and positive definite. The algorithms we describe only require forming operations $Ax$, $Bx$ and $B^{-1}x$ and avoid forming square-roots of $B$ (or operations of the form, $B^{1/2}x$ or $B^{-1/2}x$). We provide a convergence analysis  and a posteriori error bounds that build upon the work of~\cite{halko2011finding,liberty2007randomized,martinsson2011randomized} (which have been derived for the case $B=I$). Additionally, we derive some new results that provide insight into the accuracy of the eigenvalue calculations. The error analysis shows that the randomized algorithm is most accurate when the generalized singular values of $B^{-1}A$ decay rapidly. A randomized algorithm for the Generalized Singular Value Decomposition (GSVD) is also provided. Finally, we demonstrate the performance of our algorithm on computing the Karhunen-Lo\`{e}ve expansion, which is a computationally intensive GHEP problem with rapidly decaying eigenvalues.
\end{abstract}

\keywords{Randomized algorithms; Generalized Hermitian Eigenvalue Problems; Karhunen-Lo\`{e}ve expansion}

\maketitle

\section{Introduction}
Consider the Generalized Hermitian Eigenvalue Problem (GHEP)

\begin{equation}
\label{eqn:ghep}
Ax = \lambda Bx
\end{equation}
where, $B$ is Hermitian positive definite and $A$ is Hermitian. The analysis is also relevant if $B$ is not positive definite. In that case, if $B$ is not positive definite but some combination $(\alpha A + \beta B)$ is positive definite, then
the transformed problem $ Ax  = \theta (\alpha A + \beta B)x $ has eigenvalues $\theta_i = \lambda_i/(\alpha \lambda_i + \beta)$ and has the same eigenvectors as Equation~\eqref{eqn:ghep}.

We can transform the GHEP into a Hermitian Eigenvalue Problem (HEP), which is of the form $Mx = \lambda x$ for matrices $M$ positive semidefinite. Since $B$ is positive definite, it has a Cholesky Decomposition $B=LL^*$. Define $y = L^*x$ and multiplying both sides of Equation~\eqref{eqn:ghep} by $L^{-1}$, we have

\begin{equation}
\label{eqn:hep}
L^{-1}AL^{-*}L^{*}x = \lambda L^*x \qquad \Rightarrow \qquad L^{-1}AL^{-*}y = \lambda y
\end{equation}
which is a HEP and hence, any algorithm for HEPs can be used to solve GHEPs. However, computing the Cholesky decomposition is not computationally feasible for several matrices. It should be noted that this type of transformation~\eqref{eqn:hep} can be derived for any definition of square root of a matrix. Although several algorithms exist for computing the square root of a matrix, or performing matrix-vector products (henceforth, called matvecs) $B^{1/2}x$ or $B^{-1/2}x$, their application to large-scale problems is not always efficient. Another transformation, $B^{-1}Ax = \lambda x $ makes the problem into a regular eigenvalue problem. Even though $A$ and $B$ are Hermitian,  in general $B^{-1}A$ will not be Hermitian. We will focus our attention on problems for which computing Cholesky decomposition (or any other square root, for that matter) is too expensive to compute explicitly.

The key idea that we will exploit in this paper is the fact that, while $B^{-1}A$ is not Hermitian, it is Hermitian with respect to another inner product, the  $B$-inner product which we will define shortly. This property has previously been exploited by Krylov subspace based eigensolvers~\cite{grimes1994shifted,saad1992numerical}. An added advantage to using $B$-inner products is that the resulting eigenvectors are now $B$-orthonormal. There are several methods for solving the GHEP~\eqref{eqn:ghep}. These include approaches based on power and inverse iteration methods, Lanczos based methods and Jacobi-Davidson method. For a good review on this material, please refer to~\cite[chapter 5]{bai1987templates} and~\cite{saad1992numerical}. A good survey of existing software for sparse eigenvalue problems including GHEP is available at~\cite{str-6}.

Randomized algorithms have been developed for approximately computing a low-rank decomposition when the singular values decay rapidly (for a comprehensive review, see~\cite{halko2011finding}). After computing the approximate low-rank decomposition, an additional post-processing step can be performed to compute the approximate singular value decomposition. For Hermitian operators, this post-processing can be modified to obtain an approximate eigenvalue decomposition as well. The randomized SVD algorithm can be applied to the matrix $C\define B^{-1}A$ to obtain an approximate singular value decomposition. However, applying the algorithm directly to the matrix $C$, will result in singular vectors that are orthonormal but not $B$-orthonormal. A symmetric low-rank decomposition is highly desirable in many application. As a result, we would like to develop square-root free variants of the randomized SVD algorithm to compute the dominant eigenmodes of the GHEP.

The algorithms described in this paper are useful when it is necessary to quickly compute an approximation to the largest eigenmodes. The only requirement is availability of fast ways to compute $Ax$, $Bx$ and $B^{-1}x$ and it avoids computations of the form $B^{1/2}x$ and $B^{-1/2}x$. As a result, this algorithm is applicable to very general settings. The randomized algorithms are often faster, are quite robust and accompanied by theoretical guarantees.  The error analysis suggests that the algorithms are most accurate when the (generalized) singular values of $B^{-1}A$ decay rapidly. Moreover, the low-rank decompositions can be produced to any user defined tolerance, which allows the user to trade-off between computational cost and accuracy. While it is certainly true that under the same settings, Krylov subspace methods often are more accurate especially for systems of the form~\eqref{eqn:ghep} with rapidly decaying eigenvalues, randomized schemes are numerically robust and allow freedom in exploiting gains from parallelism and block matrix-vector products. As a result, randomized algorithms are well suited to computationally intensive problems and modern computing environments. For example, when efficient block methods to compute $Ax$, $Bx$ or $B^{-1}x$ exist, they can be used to significantly speed up calculations. Finally, Krylov subspace methods must be often accompanied by sophisticated algorithms to monitor restart, orthogonality and loss of precision. Randomized algorithms, on the other hand, are straightforward to implement in very few lines of code which are transparent to read. To summarize, one must weigh the trade-offs between using randomized algorithms which are low-cost, easy to implement and robust and using Krylov subspace based methods that are capable of higher accuracy but are often, much more expensive. A further discussion of the suitability of randomized algorithms to high performance computing is available in~\cite{halko2011finding,bui2012extreme}.

In addition to describing the randomized algorithms, a rigorous error analysis is also provided that closely follows the proof techniques developed in~\cite{liberty2007randomized,halko2011finding}. Furthermore, we provide computable a posteriori error bounds on 1) the approximate low-rank representation, and 2) the error between the true and the approximate eigenvalues (and eigenvectors) as a function of the low-rank representation error. To the best of our knowledge, the latter result is not available even for the case $B=I$. We also provide a randomized algorithm for the Generalized Singular Value Decomposition (GSVD). We demonstrate the performance of our algorithms on a challenging application - computing the dominant eigenmodes of the Karhunen-Lo\`{e}ve expansion.

\section{ Algorithms}\label{sec:randomized}

\begin{algorithm}[!ht]
\begin{algorithmic}[1]
\REQUIRE  matrices $A \in \mathbb{C}^{n\times n}$, and $\Omega  \in \mathbb{R}^{n\times (k+p)}$  a Gaussian random matrix. Here $k$ is the desired rank, and $p$ is an oversampling factor.
\STATE  Compute $Y = A\Omega$, and compute QR factorization $Y=QR$
\STATE  Form $ B = Q^*A $
\STATE  Compute SVD of the small matrix $B = \tilde{U}\Sigma V^*$
\STATE Form the orthonormal matrix $U = Q\tilde{U}$
\RETURN $U$, $\Sigma$ and $V$ that satisfy $A \approx U\Sigma V^*$
\end{algorithmic}
\caption{Randomized SVD}
\label{alg:randsvd}
\end{algorithm}

Let us first review the randomized SVD algorithm that is described in~\cite{halko2011finding} to compute the rank-$k$ decomposition for any matrix $A\in \mathbb{C}^{m\times n}$ that has rapidly decaying singular values. The algorithms proceed by computing a matrix $Q$ whose columns form a basis for the approximate range of $A$. This is accomplished by forming matvecs of A with random vectors drawn from an i.i.d. Gaussian distribution. The matrix $Q$ satisfies the bound $\normtwo{(I-QQ^*)A} \leq \varepsilon$. This is summarized in Algorithm~\ref{alg:randsvd}. If $A$ is Hermitian, and we have found a $Q$ that satisfies  $\normtwo{(I-QQ^*)A} \leq \varepsilon$, then it can be shown that $\normtwo{A-QQ^*AQQ^*} \leq 2\varepsilon$. With this observation, an additional step can be performed to compute a Hermitian eigenvalue decomposition. The smaller matrix $T=Q^*AQ$ is formed and its eigendecomposition $S\Lambda S^*$ is computed and $A \approx U \Lambda U^*$, where $U=QS$. This is the two pass version of the algorithm to compute largest eigenvalues and corresponding eigenvectors. A second round of matrix-vector products involving $A$ (to compute $T=Q^*AQ$) can be avoided by using the information contained in $Y, Q$ and $\Omega$. This is known as a single pass algorithm. This is summarized in Algorithm~\ref{alg:randhep}. For further details regarding the aforementioned algorithms, the reader is referred to~\cite{halko2011finding}.

\begin{algorithm}[!ht]
\begin{algorithmic}[1]
\REQUIRE  matrices $A \in \mathbb{C}^{n\times n}$, and $\Omega  \in \mathbb{R}^{n\times (k+p)}$  a Gaussian random matrix. Here $k$ is the desired rank, and $p$ is an oversampling factor.
\STATE  Compute $Y = A\Omega$, and compute QR factorization $Y=QR$
\STATE  Form $ T = Q^*AQ $ (two-pass) or $T \approx (Q^*Y)(Q^*\Omega)^{-1} $ (single-pass)
\STATE  Compute EVD of the small matrix $T = S\Lambda S^*$
\STATE Form the orthonormal matrix $U = QS$
\RETURN $U$, $\Lambda$ that satisfy $A \approx U\Lambda U^*$
\end{algorithmic}
\caption{Randomized eigenvalue decomposition}
\label{alg:randhep}
\end{algorithm}

 The main difference is that we replace the inner-product with a $B$-inner product and as a result, we no longer maintain an orthonormal basis $Q$ but a $B$-orthonormal basis. Here we summarize some basic results about $B$-inner products and the resulting vector and matrix norms. The $B$-inner product is defined as $\langle x,y \rangle_B \define y^*Bx$ and the $B$-norm $\normB{x} \define \sqrt{x^*B x}$. It satisfies the following inequality, (see, for example~\cite{meerbergen1998theoretical})
\begin{equation}
\label{eqn:Bnorm2normineq}
\frac{\normtwo{x}^2 }{\normtwo{B^{-1}}} \quad \leq \quad \normB{x}^2 \quad  \leq \quad \normtwo{x}^2 \normtwo{B}
\end{equation}

Let us define the matrix $C \define B^{-1}A$. It can be verified that $C$ is self-adjoint with respect to the $B$-inner product, i.e. $\langle Cx,y\rangle_B = \langle x,Cy \rangle_B$. The $B$-norm of a matrix is defined as an induced vector norm $\normB{M}= \max_{\normB{x} = 1} \normB{Mx}$. We will make use of this fact to derive randomized algorithms for GHEP that produces a Hermitian low-rank decomposition.
It can be verified that for any matrix $M$, making the transformation $y = B^{1/2}x$, we have that

\begin{equation}\label{eqn:prop2}
\normB{M} = \max_{x} \frac{\normB{Mx}}{\normB{x}} = \max_{y} \frac{\normtwo{B^{1/2}MB^{-1/2}y} }{\normtwo{y}} =   \normtwo{B^{1/2}MB^{-1/2}}
\end{equation}

For the error analysis, we will need a generalized notion of singular values, defined as follows
\begin{equation}\label{eqn:gensingular} \sigma_{B} (M) = \left\{\mu \left| \mu \text{ are the stationary points of } \frac{\normB{Mx}}{\normtwo{x}} \right.\right\}\end{equation}
This definition is similar to~\cite[definition 3]{van1976generalizing} with $S=B$ and $T=I$. This results in a following decomposition of the form
\[ M = U\Sigma_B V^* \qquad U^*BU=I \qquad  V^*V=I\]
and $\Sigma_B = \text{diag}\{\sigma_{B,1},\dots,\sigma_{B,n}\}$ are the generalized singular values. They have a subscript to distinguish them from the singular values defined in the regular sense. The existence of this decomposition is guaranteed by~\cite[Theorem 3]{van1976generalizing}.

\subsection{Approximating the range of $C$}

 The key step of the algorithms that follow involves the following result: we can compute a matrix $Q \in \mathbb{C}^{n\times (k+p)}$, which is $B$-orthonormal, i.e. $Q^*BQ = I$ such that

\begin{equation}
\label{eqn:projectionapproximation}
\normB{(I-QQ^*B)C} \leq \varepsilon
\end{equation}
where, the range of $Q$ approximates the range of $C$, and $p$ is an oversampling factor. We define the projection matrix $P_B \define QQ^*B$ and observe that $\normB{P_B} = 1$. The reason we choose to use the $B$-norm $\normB{\cdot}$ is because $C$ is self-adjoint with respect to the B-inner product. This is implemented as follows: We draw the random matrix $\Omega$ from a standard Gaussian distribution, and form $Y=B^{-1}A\Omega$. Then we construct a matrix $Q$ that forms a basis for the range of $Y$ and is B-orthonormal. This is obtained using a QR decomposition using the $B$-inner product. The cost of computing this basis is dominated by the cost of forming the matvecs with respect to $B^{-1}$ and $A$ and computing the QR decomposition. A practical way to estimate the error in the approximation~\eqref{eqn:projectionapproximation} and the average behavior of this error $\varepsilon$ is provided in Section~\ref{sec:aposteriori}. The computational costs of this algorithm is discussed in Section~\ref{sec:compcosts}.

Several algorithms exists for QR decomposition with standard inner product $\langle x,y\rangle = y^*x$ such as Gram-Schmidt (both classical and modified), using Householder transformations and Givens rotations. However, the use of of the $B$-inner product precludes the use of Householder transformations and Givens rotations. The use of modified Gram-Schmidt for QR decomposition with weighted inner product has been discussed before (for example, see~\cite{grimes1994shifted}). It is well known that the modified Gram-Schmidt is more stable than the classical Gram-Schmidt method even for the case $B=I$. Hence, we only consider modified Gram-Schmidt approach. However, even though the computation of $R$ is extremely accurate, $Q$ is not always orthonormal (or $B$-orthonormal) due to accumulation of round-off errors. We consider two alternative algorithms: Modified Gram-Schmidt with re-orthogonalization, denoted by MGS-R, a new algorithm considered in this paper, and `PreCholQR'~\cite{lowely2014stability}.

 To ensure the B-orthogonality up to machine precision, we extend the algorithm proposed in~\cite[Section 9.3]{gander1980algorithms}, that uses the standard inner-product, to now use the $B$ inner-product. The algorithm proposed in~\cite{gander1980algorithms} was an extension to the re-orthogonalization proposed by Rutishauser. It maintains a factorization that is more accurate than MGS by accumulating changes in $R$ due to the re-orthogonalization process and unlike the standard MGS it is also designed to work even when the matrix is rank-deficient. The extension to the $B$-inner product can be accomplished readily by changing the definition of inner-products and is summarized in Algorithm~\ref{alg:mgs}. Numerical examples in Section~\ref{sec:accuracy} indicate that the modified Gram-Schmidt with re-orthogonalization is superior because it explicitly enforces orthogonality.

\begin{algorithm}[!ht]
\begin{algorithmic}[1]
\REQUIRE $Y = [y_1,\dots,y_n]$ and a positive definite matrix $W$ 
\STATE $Q := [y_1,\dots,y_n] $ and R = zeros(n,n)
\FOR {$k=1,\dots,n$} 
\STATE $\hat{q}_k = Wq_k$,  $t := \sqrt{\hat{q}_k^*q_k}$	
\STATE flag $= 1$, $ c= 0$ 
\WHILE  {flag}
\STATE$c =  c+ 1$
\FOR {$j=1,\dots,k-1$}
\STATE $s = \hat{q}_i^*q_k $,  $r_{i,k} += s$ and $q_k -= sq_i$
\ENDFOR
\STATE $\hat{q}_k := Wq_k$ and $tt = \sqrt{\hat{q}_k^*q_k}$ 
\IF {$tt > t10\epsilon$  \AND $tt < t/10$}
\STATE flag $= 1$, $t=tt$
\ELSE
\STATE flag $=0$
\IF {$tt<10\epsilon t$} \STATE $tt = 0 $ \ENDIF
\ENDIF
\STATE $r_{kk} = tt$
\IF {$tt\epsilon != 0$} \STATE $tt = 1/tt$ \ENDIF
\STATE $q_k = q_ktt$, and $\hat{q}_k = \hat{q}_ktt$
\ENDWHILE
\ENDFOR
\RETURN  $Q \in \mathbb{C}^{m\times n}$, $WQ \in \mathbb{C}^{m\times n}$ and $R \in \mathbb{C}^{n\times n}$
\end{algorithmic}
\caption{Modified Gram-Schmidt with $W$-inner products}
\label{alg:mgs}
\end{algorithm}

We also consider the `CholQR' and `PreCholQR' algorithms described and analyzed in in~\cite{lowery2014stability}. In particular, `PreCholQR' has an additional cost due to a thin QR decomposition but has better stability properties. Given a matrix $Y \in \mathbb{C}^{m\times n}$ it outputs matrices $Q$ and $R$ such that $Y=QR$ and $Q^*WQ = I$. The algorithm and the relevant matrices have been summarized in Algorithm~\ref{alg:cholqr} and~\ref{alg:precholqr}. In particular, accounting for round-off error, the resulting decompositions for PreCholQR satisfy

  \begin{align*}
\normtwo{Y-QR} \quad \leq& \quad cmn^2u\normtwo{Q}\normtwo{U}\normtwo{S} \\
\normtwo{Q^*WQ - I} \quad \leq & \quad c'mn^2u \normtwo{Q}^2\normtwo{B} + \bigO(u^2)
\end{align*}•
where $c$ and $c'$ denote constants and $u$ denotes machine precision. Numerical experiments involving the stability have been performed in Section~\ref{sec:accuracy}.

\begin{algorithm}[!ht]
\begin{algorithmic}
\REQUIRE $Y \in \mathbb{C}^{m\times n}$, $W \in \mathbb{C}^{m\times m}$ positive definite
\STATE $Z=WY$
\STATE $C = Y^*Z$
\STATE $R = \text{chol}(C)$
\STATE $Q = YR^{-1}$, $WQ = ZR^{-1}$
\RETURN $Q \in \mathbb{C}^{m\times n}$, $WQ \in \mathbb{C}^{m\times n}$ and $R \in \mathbb{C}^{n\times n}$
\end{algorithmic}
\caption{CholQR with W-inner products}
\label{alg:cholqr}
\end{algorithm}

\begin{algorithm}[!ht]
\begin{algorithmic}
\REQUIRE $Y \in \mathbb{C}^{m\times n}$, $B \in \mathbb{C}^{m\times m}$ positive definite
\STATE [Z,S] = qr(Y)
\STATE [Q,WQ,U] = CholQR(Z)
\STATE $R = US$
\RETURN $Q \in \mathbb{C}^{m\times n}$, $WQ \in \mathbb{C}^{m\times n}$ and $R \in \mathbb{C}^{n\times n}$
\end{algorithmic}
\caption{Pre-CholQR with W-inner products}
\label{alg:precholqr}
\end{algorithm}

\subsection{Two pass algorithm}

In this Section, we derive a symmetric low-rank decomposition to the GHEP in Equation~\eqref{eqn:ghep} that uses two sets of matrix-vector products involving the matrix $A$. This algorithm will be called a two pass algorithm. In Section~\ref{sec:singlepass} we will derive an algorithm that only uses one set of matrix-vector products. The single pass algorithm has a smaller computational cost but is less accurate.

Let us assume that a $Q \in \mathbb{C}^{n\times (k+p)}$ exists such that $\normB{(I-P_B)C} \leq \varepsilon$ and is relatively easy to compute. Then, we can derive the following error bound which provides the approximation error to a symmetric low-rank decomposition,

\begin{align}\label{eqn:symmapprox}
\normB{(C-P_BCP_B) } \leq & \quad \normB{C - P_BC} + \normB{P_BC - P_BCP_B} \\ \nonumber
                     \leq & \quad \varepsilon + \normB{P_B} \normB{C-CP_B} \\ \nonumber
		     \leq & \quad 2\varepsilon
\end{align}

This inequality relies on the following results $\normB{P_B} = 1$ and $ \normB{C(I-P_B)} = \normB{(I - P_B)C}$. From~\eqref{eqn:symmapprox}, we have the following low-rank decomposition
\begin{equation}
\label{eqn:symmdecomp}
C \approx P_B C P_B \quad \Rightarrow \quad A \approx (BQ) (Q^*AQ) (BQ)^*  = (BQ)T(BQ)^*
\end{equation}
where, $T\define Q^*AQ$. From this point, the eigenvalues of the system~\eqref{eqn:ghep} can be approximately computed as the eigenvalues of the matrix $T$ and the $B-$orthogonal eigenvectors $U$ can be computed by the product of $Q$ with the eigenvectors of $T$. The algorithm is summarized in Algorithm~\ref{alg:doublepass}.

Algorithm~\ref{alg:doublepass} starts by constructing a Gaussian random matrix $n\times (k+p)$ with i.i.d. entries chosen from an normal distribution with zero mean and unit variance. Here $p$ is a oversampling factor, that is chosen to lower the error in the eigenvalue calculations. Typically, $p$ is chosen to be less than $20$ following the arguments in~\cite{halko2011finding,liberty2007randomized}. The improvement in the approximation error with increasing $p$ is verified in both theory and experiment (see Sections~\ref{sec:aposteriori} and~\ref{sec:kle}). We then form matvecs with $C$ to construct $Y$. Next, we $B$-orthonormalize the columns of $Y$, using modified Gram-Schmidt with $B$-inner products. This algorithm is summarized in Algorithm~\ref{alg:mgs}. Then, we form the $(k+p)\times (k+p)$ matrix $T = Q^*AQ$, which requires a second round of matvecs with $A$. In Section~\ref{sec:singlepass}, we will describe an algorithm that avoids this second round of forming matvecs with $A$. We then compute the eigenvalue decomposition of this smaller matrix $T$, and use this to construct the approximate generalized eigendecomposition of the matrix $C$. It can be verified that $U^*BU = I$.

\begin{algorithm}[!ht]
\begin{algorithmic}[1]
\REQUIRE  matrices $A$, $B$, and $\Omega  \in \mathbb{R}^{n\times (k+p)}$  a Gaussian random matrix. Here $A,B \in \mathbb{C}^{n\times n}$, k is the desired rank, $p\sim 20$ is an oversampling factor.
\STATE  Compute $Y = C\Omega$, where $C\define B^{-1}A$
\STATE  Form QR-factorization $Y =QR$ such that $Q^*BQ = I$ 
\STATE  Form $ T \define Q^*AQ$ and
\STATE Compute the eigenvalue decomposition $T = S\Lambda S^*$. Keep the $k$ largest eigenmodes as $S = S(:,1:k)$ and $\Lambda = \Lambda(:,1:k)$. The columns of $S$ are orthonormal.
\RETURN Matrices $U \in \mathbb{C}^{n\times k}$ and $\Lambda\in \mathbb{R}^{k\times k}$
 that satisfy

\[ A  \approx B U \Lambda (BU)^*\qquad\text{with}\qquad U = QS \quad \text{and}\quad U^*BU = I\]
\end{algorithmic}
\caption{Randomized algorithm for GHEP}
\label{alg:doublepass}
\end{algorithm}

 that satisfy

\subsection{Single Pass algorithm}\label{sec:singlepass}
Algorithm~\ref{alg:doublepass} requires forming two sets of matvecs $Ax$ for a total of $2(k+p)$ matvecs. In some applications, matrix-vector products with $A$ can be expensive and must be used economically. It is possible to use the information already available in the matrices $Q$, $Y$ and $\Omega$ to avoid a second round of matvecs with A. This is called a single pass algorithm, following the convention in~\cite{halko2011finding}. In order to derive such an algorithm, we make the following observation. First, we define $\bar{Y} \define A\Omega$
\[ \Omega^*\bar{Y}  = \Omega^*A\Omega \approx (\Omega^*BQ)\underbrace{Q^*AQ}_{\define T}(Q^*B\Omega) \]
using the relation in~\eqref{eqn:symmdecomp}. Therefore, we can compute $T \approx  (\Omega^*BQ)^{-1}(\Omega^*\bar{Y}) (Q^*B\Omega)^{-1}$ by avoiding additional matvecs with $A$. At first glance, it appears that we need a second round of matvecs with $B$ to form $F \define Q^*B\Omega$. However, this is not the case since by using Algorithm~\ref{alg:mgs} we have both $Q$ and $BQ$. Therefore, forming $F$ only requires an additional $\bigO(k+p)^3$. We summarize the single pass algorithm in Algorithm~\ref{alg:singlepass}. Although this method is computationally advantageous, an additional error is used in computing $T$ which can be understood using Theorem~\ref{thm:singlepasserr} in section~\ref{sec:aposteriori}.
\begin{algorithm}[!ht]
\begin{algorithmic}[1]
\REQUIRE  matrices $A$, $B$ and $\Omega \in \mathbb{R}^{n\times (k+p)}$ is a Gaussian random matrix. Here $A,B \in \mathbb{R}^{n\times n}$, k is the desired rank, $p\sim 20$ is an oversampling factor.

\STATE  Compute $\bar{Y} = A\Omega$, and $Y = B^{-1}A\Omega$
\STATE  Compute $Y =QR$ such that $Q^*BQ = I$ 
\STATE  Form $ \tilde{T} = (\Omega^*BQ)^{-1}(\Omega^*\bar{Y}) (Q^*B\Omega)^{-1}$
\STATE  Compute the eigenvalue decomposition $\tilde{T} = S\Lambda S^*$. Keep the $k$ largest eigenmodes  as $S = S(:,1:k)$ and $\Lambda = \Lambda(:,1:k)$. The columns of $S$ are orthonormal.

\RETURN Matrices $U \in \mathbb{R}^{n\times k}$ and $\Lambda\in \mathbb{R}^{k\times k}$
 that satisfy

\[ A  \approx  (BU) \Lambda (BU)^*\qquad\text{with}\qquad U = QS \]
\end{algorithmic}
\caption{Randomized algorithm for GHEP - Single pass}
\label{alg:singlepass}
\end{algorithm}

We note that a different (but similar) approximation was proposed in~\cite{halko2011finding}. Starting with
\[Q^*\bar{Y} =  \Omega^*A\Omega \approx (Q^*BQ)\underbrace{Q^*AQ}_{\define T}(Q^*B\Omega) \]
where we have used  the relation in~\eqref{eqn:symmdecomp} that $A \approx (BQ)T(BQ)^*$. However, we have not pursued this approach before. 

\subsection{Nystr\"{o}m method}
Yet, another alternative was proposed in~\cite{halko2011finding} to construct a low rank approximation to $A$ given a matrix $Q$ with orthonormal columns that approximates the range of $A$. The Nystr\"{o}m method builds a more sophisticated rank-$k$ approximation, namely $A \approx AQ(Q^*AQ)^{-1}Q^*A$. It can be verified that this approximation can be used without modification even for the case $B \neq I$. However, to convert this low-rank approximation $A \approx AQ(Q^*AQ)^{-1}Q^*A$ to the form $A \approx BU \Lambda (BU)^*$, we have to deviate slightly.

First, we use Cholesky factorization to factorize $T = LL^*$. Next, construct $M \define  AQL^{-*}$. Then, we use Algorithm~\ref{alg:mgs} with input matrices $Y=M$ and $W = B^{-1}$ to get $Q_MR_M = M$ such that $Q_M^*B^{-1}Q_M = I$ and $\hat{Q}_M^*B\hat{Q}_M = I$. Compute the SVD of $R_M = U_M\Sigma_MV_M^*$. Finally, we construct the low-rank factorization $A \approx BU\Lambda (BU)^*$ by constructing $U = \hat{Q}_M U_M$ and $\Lambda = \Sigma_M^2$. The algorithm is summarized in~\ref{alg:nystrom}. For numerical stability, if $T$ is rank-deficient or ill-conditioned, its inverse can be replaced with the pseudo-inverse and the algorithm proceeds similarly.

The computational cost of the Nystr\"{o}m algorithm is the same as the two-pass algorithm with an additional round of matvecs with $B^{-1}$ and an overall additional cost of $\bigO(k+p)^2n$. Theoretical and empirical results for the  Nystr\"{o}m method suggests that it is often a better approximation than the two-pass algorithm. The reason is that the Nystr\"{o}m method is essentially performing (for free) an additional step of the randomized power iteration described in~\cite[Algorithm 4.3]{halko2011finding}.

\begin{algorithm}[!ht]
\begin{algorithmic}[1]
\REQUIRE  matrices $A$, $B$ and $\Omega \in \mathbb{R}^{n\times (k+p)}$ is a Gaussian random matrix. Here $A,B \in \mathbb{R}^{n\times n}$, k is the desired rank, $p\sim 20$ is an oversampling factor.

\STATE  Compute $Y = B^{-1}A\Omega$
\STATE  Compute $Y =QR$ such that $Q^*BQ = I$ using modified Gram-Schmidt (see Algorithm~\ref{alg:mgs}).
\STATE  Form $ T= Q^*AQ$ and compute the Cholesky factorization $T = LL^*$
\STATE  Form $M = AQL^{-*}$
\STATE  Using Algorithm~\ref{alg:mgs} with $W=B^{-1}$ to get $Q_MR_M = M$ such that $Q_M^*B^{-1}Q_M = I$ and $\hat{Q}_M^*B\hat{Q}_M = I$.
\STATE Compute the SVD of $R_M = U_M\Sigma_MV_M^*$. Keep the $k$ largest modes  as $U_M = U_M(:,1:k)$ and $\Lambda = \Sigma_M(:,1:k)^2$.

\RETURN Matrices $U \in \mathbb{R}^{n\times k}$ and $\Lambda\in \mathbb{R}^{k\times k}$  that satisfy

\[ A  \approx  (BU) \Lambda (BU)^*\qquad\text{with}\qquad U = \hat{Q}_FU_F\]
\end{algorithmic}
\caption{Randomized algorithm for GHEP - Nystr\"{o}m version}
\label{alg:nystrom}
\end{algorithm}

\subsection{Summary of computational costs}\label{sec:compcosts}
We now briefly discuss the costs associated with the various algorithms described so far. The cost of the two pass algorithm is $2(k+p)$ matvecs with $A$, $(k+p)$ matvecs and $B^{-1}x$ and an additional $\bigO (k+p)^2n$ operations for forming the approximate eigenvalues and eigenvectors. The $B$-orthogonalization is accomplished using Algorithm~\ref{alg:mgs} which only uses one set of $(k+p)$ matvecs with $B$ (assuming no re-orthogonalization), but in return we get two sets of vectors $Q$ and $\hat{Q}$ which satisfy $Q^*BQ = I$ and $\hat{Q}^*B^{-1}\hat{Q} = I$. The Modified Gram-Schmidt also requires $\bigO(k+p)^2n$ operations for computing inner products. The single pass algorithm, on the other hand, only uses one set of matvecs with A. The comparison of the costs between the algorithms is summarized in Table~\ref{tab:compcosts}. However, it should be noted that if re-orthogonalization occurs in the modified Gram-Schmidt algorithm then the number of matvecs involving $B$ and $B^{-1}$ could be higher.

 However, under certain circumstances, the algorithms described can be further accelerated. We provide a few examples:
\begin{itemize}
\item It is sometimes advantageous to apply a matrix to $k+p$ vectors simultaneously rather than execute $k+p$ matvecs consecutively. For example, out-of-core finite-element codes are more efficient when they are programmed to exploit the presence of a block of the matrix $A$ in fast memory, as much as possible~\cite{saad1992numerical}.
\item Computing $A\Omega$ and $B^{-1}A\Omega$ can be trivially parallelized. Since this is often the chief bottleneck, considerable gains might be obtained by parallelism.
 \end{itemize}
It should be noted that the gains from using randomized techniques in comparison to classical methods (such as Krylov subspace methods) is not because they have a smaller computational cost but rather because they allow us to to reorganize our calculations such that we can fully exploit matrix properties and the computer architecture~\cite{halko2011finding}. 

\begin{table}[!ht]
\centering
\begin{tabular}{|c|c|c|c|c|}\hline
Method \ Cost & $Ax$ & $Bx$ & $B^{-1}x$ & Scalar work\\ \hline
Two Pass Algorithm~\ref{alg:doublepass} & $2(k+p)$ & $(k+p)$ & $(k+p)$ &$\bigO(k+p)^2n$   \\ \hline
Single pass Algorithm~\ref{alg:singlepass} & $(k+p)$ & $(k+p)$ & $(k+p)$ & $\bigO(k+p)^2n$  \\ \hline
Nystr\"{o}m Algorithm~\ref{alg:nystrom} & $2(k+p)$ & $(k+p)$ & $2(k+p)$ &$\bigO(k+p)^2n$\\ \hline
\end{tabular}
\caption{Summary of computational costs (assuming no re-orthogonalization in the modified Gram-Schmidt algorithm)}
\label{tab:compcosts}
\end{table}

\section{Generalized Singular Value Decomposition}
Generalized SVD (GSVD) is often used in the context of inverse problems and deblurring. It has applicability both as an analytical tool and practical utility in computing minimum norm solutions in regularized weighted least squares problems. Two different generalizations of the SVD have been discussed in~\cite{van1976generalizing}. Here we consider the second definition in~\cite[definition 3]{van1976generalizing}, with $A\in \mathbb{C}^{m\times n}$ and two positive definite matrices $S\in\mathbb{C}^{m\times m}$ and $T\in\mathbb{C}^{n\times n}$
\begin{equation}\label{eqn:gensingular} \sigma_{S,T} (A) = \left\{\mu \left| \mu \text{ are the stationary points of } \frac{\norm{Mx}{S}}{\norm{x}{T}} \right.\right\}\end{equation}
This results in a following decomposition of the form
\[ U^{-1}AV = \Sigma_{S,T}   \qquad U^*SU=I \qquad  V^*TV=I\]
and $\Sigma_{S,T} = \text{diag}\{\sigma_{S,T,1},\dots,\sigma_{S,T,n}\}$ are the generalized singular values.

A simple modification of the algorithms for GHEP yields us an algorithm for the GSVD as defined above.  We first compute $Y_1=A\Omega_1$ and $Y_2 = A^*\Omega_2$. We S-orthonormalize $Y_1$ and T-orthonormalize $Y_2$ using Algorithm~\ref{alg:precholqr} so that $Y_1 = Q_1R_1$ with $Q_1^*SQ_1 = I$ and $Y_2 = Q_2R_2$ with $Q_2^*TQ_2 = I$. We have the following error bounds $\normtwo{(I-Q_1Q_1^*S)A} \leq \varepsilon_S$ and $\normtwo{(I-Q_2Q_2^*T)A^*} \leq \varepsilon_T$. Error bounds of the type derived in Proposition~\ref{prop:random} can be established in this case as well. It can be shown that
\[ \normtwo{A-Q_1Q_1^*SATQ_2Q_2^*} \leq \varepsilon_S +  \varepsilon_T\normtwo{Q_1Q_1^*S}\]
Based on the above approximate low-rank representation, compute $F = Q_1^*SATQ_2$ and compute its SVD $F= \tilde{U}\Sigma \tilde{V}^*$. Then, the approximate GSVD can be computed using
\[ A \approx U \Sigma V^*\qquad U = Q_1\tilde{U} \qquad V = Q_2\tilde{V}\]
 and the matrices $U$ and $V$ satisfy the relations $U^*SU = I$ and $V^*TV = I$.

Generalized SVD is more popularly defined in the following form~\cite{van1976generalizing,golub2012matrix}: given two matrices $A\in \mathbb{C}^{m_A\times n}$ and $B \in \mathbb{C}^{m_B\times n}$, with $m_A \leq n, $the GSVD is given by
\[ A = UCX^* \quad B = VSX^*\]
where, $U \in \mathbb{C}^{m_A\times m_A}$ and $V \in \mathbb{C}^{m_B\times m_B}$ are unitary matrices, $X\in \mathbb{C}^{n\times n}$ is a square matrix $C,S$ are diagonal matrices with non-negative entries and satisfy the relation $C^*C + S^*S = I$. The generalized singular values are given by $\sigma(A,B)$ are given by the ratio of the diagonal entries of $C$ and $S$. The relation between the two definitions presented here is that when rank$(B)=n$, the generalized singular values of the matrix pair $\sigma(A,B) = \sigma_{S,T}$ with $S= I_{m_A}$ and $T= B^*B$.

\section{Convergence and a posteriori error bounds}\label{sec:aposteriori}

The idea of randomized algorithms is to compute matrix-vector products involving matrix $C \define B^{-1}A$ with vectors $\omega_i$ that have i.i.d. entries chosen from standard normal distribution. These columns, when appropriately orthonormalized form an approximate basis for the column space spanned by the eigenvectors corresponding to the largest eigenvalues. In Section~\ref{sec:randomized}, we assumed that we can compute a $Q\in \mathbb{R}^{n\times (k+p)}$ that satisfied the error bound~\eqref{eqn:projectionapproximation}. In order to estimate the resulting error in the low-rank representation $\varepsilon$, we use the following result stated in the form of a proposition.
\begin{propos}\label{prop:random}
Draw a sequence of random vectors $\omega_i$ that have i.i.d. entries chosen from standard normal distribution. Let $C \define B^{-1}A$ with $A$ symmetric and $B$ symmetric positive definite. Fix a positive integer $r$ and $\alpha > 1 $.
\begin{equation}
\label{eqn:randomapprox}
\normB{(I-QQ^*B)C}  \leq  \alpha \sqrt{\frac{2\normtwo{B^{-1}}}{\pi}} \smash{\displaystyle\max_{i = 1,\dots,r}} \normB{(I-QQ^*B)C\omega_i}
\end{equation}
holds with probability at least $1-\alpha^{-r}$.
\end{propos}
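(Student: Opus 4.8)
The plan is to reduce the claim to the Euclidean ($B=I$) a posteriori estimate of \cite{halko2011finding,liberty2007randomized} by a careful change of norm. Throughout I would treat $M \define (I-QQ^*B)C$ as a \emph{fixed} matrix and the $\omega_i$ as fresh i.i.d. standard Gaussian vectors that are independent of $Q$; this independence is exactly what makes an a posteriori estimate legitimate, with $Q$ regarded as already computed and deterministic.

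The one crucial identity is that for any vector $y$ we have $\normB{y}^2 = y^*By = \normtwo{B^{1/2}y}^2$, hence $\normB{My} = \normtwo{B^{1/2}My}$. This suggests introducing the auxiliary matrix $N \define B^{1/2}M$, for which $\normtwo{N\omega_i} = \normB{M\omega_i}$ holds \emph{exactly}. The virtue of this particular choice is that the sampled quantities appearing on the right-hand side of the claim are literally Euclidean norms $\normtwo{N\omega_i}$ of a fixed matrix acting on \emph{standard} Gaussian vectors, which is precisely the hypothesis of the Euclidean estimate. I would emphasize that one cannot instead apply that estimate to $B^{1/2}MB^{-1/2}$ directly, since the natural substitution $y=B^{1/2}\omega$ turns $\omega$ into an $\normal(0,B)$ vector rather than a standard Gaussian; this mismatch is the only genuine obstacle, and the choice $N=B^{1/2}M$ sidesteps it.

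With $N$ in hand I would invoke the Euclidean bound: with probability at least $1-\alpha^{-r}$, $\normtwo{N} \le \alpha\sqrt{2/\pi}\,\max_i\normtwo{N\omega_i}$. Converting the left-hand side back to the $B$-norm is where the factor $\normtwo{B^{-1}}$ is paid. By~\eqref{eqn:prop2}, $\normB{M} = \normtwo{B^{1/2}MB^{-1/2}} = \normtwo{NB^{-1/2}} \le \normtwo{N}\,\normtwo{B^{-1/2}} = \sqrt{\normtwo{B^{-1}}}\,\normtwo{N}$, using $\normtwo{B^{-1/2}} = \sqrt{\normtwo{B^{-1}}}$. Chaining the two estimates and substituting $\normtwo{N\omega_i} = \normB{M\omega_i}$ yields $\normB{M} \le \alpha\sqrt{2\normtwo{B^{-1}}/\pi}\,\max_i\normB{M\omega_i}$ on the same probability-$(1-\alpha^{-r})$ event, which is exactly the assertion.

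For completeness I would recall that the Euclidean estimate itself is a one-line argument via the top singular pair $u,v$ of $N$: since $u^*N = \sigma_1 v^*$ with $\sigma_1 = \normtwo{N}$, we get $\normtwo{N\omega} \ge |u^*N\omega| = \normtwo{N}\,|v^*\omega|$, where $v^*\omega \sim \normal(0,1)$; the Gaussian tail bound $\Pr[|g|<t]\le t\sqrt{2/\pi}$ then makes a single sample violate the inequality with probability at most $1/\alpha$, and independence of the $r$ samples gives the failure probability $\alpha^{-r}$. I expect no serious difficulty beyond bookkeeping here; the one thing to get right is the \emph{direction} of the norm conversion $\normB{M}\le\sqrt{\normtwo{B^{-1}}}\,\normtwo{N}$ and the fact that the accompanying $\sqrt{\normtwo{B^{-1}}}$ is what produces the stated constant.
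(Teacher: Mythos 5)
Your proposal is correct and follows essentially the same route as the paper: both bound $\normB{(I-QQ^*B)C}$ by $\sqrt{\normtwo{B^{-1}}}\,\normtwo{B^{1/2}(I-QQ^*B)C}$ via~\eqref{eqn:prop2}, apply the Euclidean a posteriori lemma of Halko et al.\ to the auxiliary matrix $B^{1/2}(I-QQ^*B)C$ (your $N$, the paper's $M$), and use $\normtwo{N\omega_i}=\normB{(I-QQ^*B)C\omega_i}$ to convert the sampled quantities back. Your additional remarks --- the explicit top-singular-pair argument behind the Euclidean lemma and the observation about why one cannot apply it to $B^{1/2}MB^{-1/2}$ directly --- are correct elaborations of steps the paper leaves to a citation.
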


\begin{proof}

Using the relation in Equation~\eqref{eqn:prop2}  we have the inequality that
\[\normB{(I-QQ^*B)C}  =  \normtwo{B^{1/2}(I-QQ^*B)CB^{-1/2} }\leq \sqrt{\normtwo{B^{-1}}}\normtwo{B^{1/2}(I-QQ^*B)C }\] Define the matrix $M = B^{1/2}(I-QQ^*B)C$ and using the result from~\cite[lemma 4.1]{halko2011finding} to the matrix $M$, we arrive at
\begin{align*}
\normB{(I-QQ^*B)C} \quad  \leq& \quad   \alpha \sqrt{\frac{2\normtwo{B^{-1}}}{\pi}}  \smash{\displaystyle\max_{i = 1,\dots,r}} \normtwo{B^{1/2}(I-QQ^*B)C\omega_i} \\
= & \quad \alpha \sqrt{\frac{2\normtwo{B^{-1}}}{\pi}}  \smash{\displaystyle\max_{i = 1,\dots,r}} \normB{(I-QQ^*B)C\omega_i}
\end{align*}
 holds with probability at least $1-\alpha^{-r}$.
\end{proof}

In practice, $\normtwo{B^{-1}}$ might not be easy to compute. Instead, we propose a crude estimator that is easy to compute. Observe that $\normB{q_i} = 1$. Using inequality~\eqref{eqn:Bnorm2normineq}, we have
\begin{equation}\label{Binvnormapprox}
\frac{\normtwo{q_i}^2}{\normtwo{B^{-1}}} \leq \normB{q_i}^2 = 1 \quad \Rightarrow \quad \sqrt{\normtwo{B^{-1}}} \geq \max_{i=1,\dots,r} \normtwo{q_i}
\end{equation}
The significance of the Proposition~\ref{prop:random} is that we now have an easy to compute a-posteriori bound for our error that can be obtained by forming matvecs with C. However, as~\cite{halko2011finding} suggests, this is a crude estimate. The cost of this estimator is mostly performing matvecs with $A$ and $B^{-1}$. Thus, we can make a guess for the numerical rank of $B^{-1}A$, compute the low-rank approximation $C \approx QQ^*BC$, evaluate the error estimate in Proposition~\ref{prop:random} and keep adding more samples if this error estimate is too large. However, the matvecs $B^{-1}A\Omega$ performed on random vectors for the error estimator can be re-used. As a result, the error estimator is almost free of cost.  A better estimate can be obtained by using power iteration acting on a random vector.

The analysis in~\cite{halko2011finding} suggests that if the spectrum of $C$ decays rapidly, then the error in the approximation is quite small. We are now ready to state our main result and defer the proof to the Appendix.

\begin{theorem}\label{thm:main}
Let $Q$ be computed according to Algorithm~\ref{alg:doublepass} by choosing a Gaussian random matrix $\Omega \in \mathbb{R}^{n\times r}$ with $r=k+p$. Let $C=U\Sigma_B V^*$ be the singular value decomposition in the generalized sense~\eqref{eqn:gensingular}. We have the inequality
\[ E\normB{(I-P_B)C} \leq\sqrt{\normtwo{B^{-1}}}\left[ \left( 1+\sqrt{\frac{k}{p-1}}\right)\sigma_{B,k+1} +\frac{e\sqrt{k+p}}{p}\left(\sum_{j=k+1}^n\sigma_{B,j}^2\right)^{1/2} \right]  \]
where, $\sigma_{B,j}$ for $j=1,\dots,n$ are the generalized singular values given by~\eqref{eqn:gensingular} and $E[\cdot]$ denotes the expectation.
\end{theorem}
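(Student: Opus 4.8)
The plan is to reduce the generalized bound to the Euclidean bound of Halko, Martinsson and Tropp~\cite[Theorem 10.6]{halko2011finding} by means of the norm identity~\eqref{eqn:prop2}, namely $\normB{M}=\normtwo{B^{1/2}MB^{-1/2}}$. The object that makes this work cleanly is the matrix $K\define B^{-1/2}A = B^{1/2}C$. I would first record two facts about $K$: its ordinary singular values coincide with the generalized singular values $\sigma_{B,j}$, and the random range that Algorithm~\ref{alg:doublepass} produces is, after a $B^{1/2}$ twist, exactly the ordinary randomized range of $K$ driven by the \emph{same} standard Gaussian $\Omega$. For the first fact, since $A$ is Hermitian, $K^*K = AB^{-1/2}B^{-1/2}A = AB^{-1}A = C^*BC$, whose eigenvalues are $\sigma_{B,j}^2$ by the defining decomposition $C=U\Sigma_B V^*$ with $U^*BU=I$; hence $\sigma_j(K)=\sigma_{B,j}$ for every $j$, with the same descending ordering.

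For the reduction I would apply~\eqref{eqn:prop2} to $M=(I-P_B)C$ and expand
\[ B^{1/2}(I-QQ^*B)C\,B^{-1/2} = \left(I-\hat Q\hat Q^*\right)K\,B^{-1/2}, \qquad \hat Q\define B^{1/2}Q, \]
where $\hat Q$ has orthonormal columns because $Q^*BQ=I$, so $\hat Q\hat Q^*$ is the ordinary orthogonal projector onto $\mathrm{range}(\hat Q)=B^{1/2}\,\mathrm{range}(C\Omega)=\mathrm{range}(K\Omega)$. Submultiplicativity and $\normtwo{B^{-1/2}}=\sqrt{\normtwo{B^{-1}}}$ then give the deterministic split
\[ \normB{(I-P_B)C} = \normtwo{(I-\hat Q\hat Q^*)K\,B^{-1/2}} \leq \sqrt{\normtwo{B^{-1}}}\;\normtwo{(I-\hat Q\hat Q^*)K}. \]
At this point $\normtwo{(I-\hat Q\hat Q^*)K}$ is precisely the projection error of the ordinary randomized SVD applied to $K$ with the standard Gaussian test matrix $\Omega$. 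I would take expectations, pull the deterministic scalar $\sqrt{\normtwo{B^{-1}}}$ out of $E[\cdot]$, and invoke~\cite[Theorem 10.6]{halko2011finding} verbatim (under the usual $k\geq 2$, $p\geq 2$, $k+p\leq n$) with $\sigma_j(K)=\sigma_{B,j}$; its two terms are then exactly the bracketed expression in the claim.

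The step that requires the most care — and the reason I would resist the more obvious symmetrization — is keeping the test matrix standard Gaussian. Symmetrizing $C$ to $B^{-1/2}AB^{-1/2}$ would force the sketch to be driven by $B^{1/2}\Omega$, whose columns are $\normal(0,B)$ rather than $\normal(0,I)$, and Theorem~10.6 would no longer apply without an extra change-of-variables argument. The asymmetric factorization $K=B^{1/2}C$ avoids this: the left factor $B^{1/2}$ converts $Q$ into the orthonormal $\hat Q$, while the leftover $B^{-1/2}$ on the right is harmlessly absorbed into the scalar $\sqrt{\normtwo{B^{-1}}}$, leaving the original standard Gaussian untouched inside $K\Omega$. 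The only remaining bookkeeping is to confirm that the range of $Q$ equals $\mathrm{range}(C\Omega)$ (immediate from the $B$-orthonormal QR step) and that the partition of $\Sigma_B$ fed into Theorem~10.6 is the one separating the top $k$ generalized singular values, so that the residual tail is governed by $\sigma_{B,k+1}$ and $\sum_{j=k+1}^n\sigma_{B,j}^2$.
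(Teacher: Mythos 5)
Your argument is correct, and it reaches the theorem by a genuinely different route than the paper. The paper works in the $B$-norm throughout: it first establishes a deterministic bound of the form $\normB{(I-QQ^*B)C}\leq 2\normB{C-C\Omega F}+2\normB{C\Omega G-QRG}$, kills the second term by the choice of $Q$, then writes the generalized SVD $C=U\,\mathrm{diag}(\Sigma_{B,1},\Sigma_{B,2})V^*$, constructs the interpolation matrix $G=[\Omega_1^\dagger\ \ 0]V^*$ explicitly, bounds $\normB{C-C\Omega G}^2\leq\normtwo{B^{-1}}\bigl(\normtwo{\Sigma_2}^2+\normtwo{\Sigma_2\Omega_2\Omega_1^\dagger}^2\bigr)$, and only then invokes the Gaussian expectation machinery of Halko et al. --- in effect re-deriving the deterministic half of their argument in the weighted setting. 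You instead whiten once at the outset: setting $K=B^{1/2}C$, checking $\sigma_j(K)=\sigma_{B,j}$ via $K^*K=AB^{-1}A=C^*BC$, and observing that $B^{1/2}(I-P_B)CB^{-1/2}=(I-\hat Q\hat Q^*)KB^{-1/2}$ with $\hat Q\hat Q^*$ the ordinary orthogonal projector onto $\mathrm{range}(K\Omega)$, so that Theorem~10.6 of~\cite{halko2011finding} applies verbatim to $K$ with the original standard Gaussian $\Omega$ after the scalar $\normtwo{B^{-1/2}}$ is pulled out. Your reduction is shorter and avoids the $F$, $G$, $\Omega_1^\dagger$ bookkeeping entirely (and incidentally sidesteps the stray factors of $2$ in the paper's deterministic splitting); the paper's route has the advantage of exposing the structural quantity $\Sigma_2\Omega_2\Omega_1^\dagger$, which is what one would need to extend the analysis to deviation bounds or power-iteration variants. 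Both proofs hinge on the same two facts --- the identification of the generalized singular values with the ordinary singular values of the whitened operator, and the absorption of $B^{-1/2}$ into $\sqrt{\normtwo{B^{-1}}}$ --- so the resulting bounds coincide. The only caveats, which you already flag and which the paper shares, are the standing hypotheses $p\geq 2$ and $k+p\leq n$ needed for Theorem~10.6, and the almost-sure full column rank of $C\Omega$ so that $\mathrm{range}(Q)=\mathrm{range}(C\Omega)$.
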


In addition to the average spectral error, an expression for the deviation bounds of the spectral error can be derived similar to~\cite[Theorem 10.8]{halko2011finding}. The spectral error suggests that if the singular values (in the generalized sense) are decaying rapidly then the error due to the low-rank approximation is small, in expectation. Based on the analysis in~\cite{halko2011finding}, this result is not surprising and we defer the proof of this theorem to the appendix. The generalized singular value decomposition can be computed using the algorithm described in~\cite{van1976generalizing}. However, this approach requires forming square roots of $B$. We can instead use the following inequality to provide an estimate $\sigma_{B,k} \leq \sqrt{\normtwo{B}} \sigma_{k}$. Furthermore, the above error bound suggests that the error is high when $\normtwo{B^{-1}}$ is large. In several cases $B^{-1}$ is bounded, for instance in finite elements, where $B$ is the mass matrix and is spectrally equivalent to the identity operator. Otherwise, if some combination $(\alpha A + \beta B)$ can be found such that its inverse has a small norm, we can instead solve the transformed problem $ Ax  = \theta (\alpha A + \beta B)x $.

 At this point, we have provided both an a-priori and a posteriori measure of error in the low-rank approximation. However, does a small error in the low-rank approximations imply that there is a small error in the subsequent eigenvalue calculations? In order to answer this question, we turn to some results from the theory of spectral approximation. We now derive expressions for the error between the computed eigenvalues and the true eigenvalues and the angle between the true and approximate eigenvectors. It should be noted that a result of this kind is common in the theory of perturbation for eigenvalues of Hermitian matrix and makes use of the Kato-Temple Theorem~\cite[Theorem 3.8]{saad1992numerical} and~\cite[Section 7.1, chapter 5]{bai1987templates}.

\begin{propos}\label{prop:apost}
Let $Q$ satisfy the relation $\normB{(I-P_B)C} \leq \varepsilon$ so that $\normB{C-P_BCP_B} \leq 2\varepsilon$. Let the eigenpair $(\tilde{\lambda},\tilde{u})$ be an approximation to the  eigenvalue problem $Ax = \lambda Bx $ calculated by Algorithms~\ref{alg:doublepass,alg:singlepass,alg:nystrom}. Then we have the following error bounds
\[ |\lambda-\tilde{\lambda}| \leq \min\{2\varepsilon,\frac{4\varepsilon^2}{\delta}\}\qquad \sin\angle_B (u,\tilde{u}) \leq \frac{2\varepsilon}{\delta}\]

where, $\delta = \min_{\lambda_i\neq\lambda} |\tilde{\lambda}-\lambda_i|$ is the gap between the approximate eigenvalue $\tilde{\lambda}$ and any other eigenvalue and $\angle_B(x,y) = \arccos \frac{|<x,y>_B|}{\normB{x}\normB{y}}$
\end{propos}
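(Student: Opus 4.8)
The plan is to reduce everything to standard Hermitian perturbation theory by passing through the congruence $y = B^{1/2}x$ already used in~\eqref{eqn:prop2}. Under this map the operator $C = B^{-1}A$ becomes the Hermitian matrix $\hat{C} \define B^{1/2}CB^{-1/2} = B^{-1/2}AB^{-1/2}$, the $B$-inner product becomes the Euclidean one, the $B$-orthonormal eigenvectors of $C$ map to an orthonormal eigenbasis of $\hat{C}$, $B$-angles become Euclidean angles, and the $B$-norm of any residual equals the Euclidean norm of its image. Consequently the real eigenvalues $\{\lambda_i\}$ of the GHEP and their $B$-orthonormal eigenvectors $\{u_i\}$ (with $Cu_i = \lambda_i u_i$) behave exactly like the spectral data of a Hermitian matrix, and it suffices to establish the three bounds in this transformed setting.

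First I would identify the computed pair. For each of the algorithms the returned vector $\tilde{u} = Qs$ lies in the range of $Q$, is $B$-normalized ($\normB{\tilde{u}}=1$ since $Q^*BQ=I$ and $\normtwo{s}=1$), and satisfies $P_B\tilde{u}=\tilde{u}$; moreover $T = Q^*AQ$ and $Ts = \tilde\lambda s$ give $\tilde{\lambda} = s^*Ts = \tilde{u}^*A\tilde{u} = \langle C\tilde{u},\tilde{u}\rangle_B$, so $\tilde{\lambda}$ is precisely the $B$-Rayleigh quotient of $\tilde{u}$ and $(\tilde\lambda,\tilde u)$ is an exact eigenpair of the compressed operator $P_BCP_B = Q(Q^*AQ)Q^*B$. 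The residual with respect to the true operator is then $r \define C\tilde{u} - \tilde{\lambda}\tilde{u} = (C - P_BCP_B)\tilde{u}$, where the identity uses $P_BCP_B\tilde{u}=\tilde\lambda\tilde{u}$. Hence $\normB{r} \le \normB{C - P_BCP_B} \le 2\varepsilon$ by the symmetric approximation bound~\eqref{eqn:symmapprox}.

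Next I would derive the eigenvalue bounds. Expanding $\tilde{u} = \sum_i c_i u_i$ in the $B$-orthonormal eigenbasis gives $\normB{r}^2 = \sum_i |c_i|^2(\lambda_i - \tilde\lambda)^2$ with $\sum_i |c_i|^2 = 1$, so the eigenvalue $\lambda$ nearest $\tilde\lambda$ obeys $|\lambda - \tilde\lambda| \le \normB{r} \le 2\varepsilon$. For the sharper quadratic estimate I would invoke the Kato--Temple residual bound~\cite[Theorem 3.8]{saad1992numerical} for the Hermitian matrix $\hat{C}$: since $\tilde\lambda$ is the Rayleigh quotient and $\delta = \min_{\lambda_i\neq\lambda}|\tilde\lambda - \lambda_i|$ is the gap to the rest of the spectrum, one has $|\lambda - \tilde\lambda| \le \normB{r}^2/\delta \le 4\varepsilon^2/\delta$. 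Taking the smaller of the two yields $|\lambda - \tilde\lambda| \le \min\{2\varepsilon, 4\varepsilon^2/\delta\}$.

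For the eigenvector bound I would reuse the same expansion. With $u = u_1$ the target eigenvector, $\cos\angle_B(u,\tilde u) = |c_1|$ and $\sin^2\angle_B(u,\tilde u) = \sum_{i\neq 1}|c_i|^2$; since $|\lambda_i - \tilde\lambda|\ge\delta$ for every $i$ with $\lambda_i\neq\lambda$, the residual identity gives $\normB{r}^2 \ge \delta^2\sum_{i\neq 1}|c_i|^2 = \delta^2\sin^2\angle_B(u,\tilde u)$, hence $\sin\angle_B(u,\tilde u)\le\normB{r}/\delta\le 2\varepsilon/\delta$. The main obstacle is the residual step rather than the perturbation arithmetic: one must verify that $(\tilde\lambda,\tilde u)$ really is an exact eigenpair of $P_BCP_B$ with $\tilde\lambda$ equal to the $B$-Rayleigh quotient, which is immediate for Algorithm~\ref{alg:doublepass} but needs an extra argument for the single-pass and Nystr\"{o}m variants where $T$ is only approximated; there the additional compression error (quantified by Theorem~\ref{thm:singlepasserr}) must either be folded into $\varepsilon$ or absorbed by working directly with the Rayleigh quotient $\langle C\tilde u,\tilde u\rangle_B$. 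A secondary technical point is a degenerate target eigenvalue, where $u$ must be read as the $B$-orthogonal projection of $\tilde u$ onto the corresponding eigenspace.
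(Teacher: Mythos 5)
Your proof follows essentially the same route as the paper's: both identify $(\tilde\lambda,\tilde u)$ as an exact eigenpair of the compressed operator $P_BCP_B$ with $\tilde\lambda$ the $B$-Rayleigh quotient, bound the residual $r = C\tilde u - \tilde\lambda\tilde u$ by $\normB{C-P_BCP_B}\leq 2\varepsilon$, and then apply the standard Hermitian residual and Kato--Temple bounds (which you re-derive via the eigenbasis expansion where the paper simply cites~\cite{bai1987templates}). Your closing caveat --- that for the single-pass and Nystr\"om variants $(\tilde\lambda,\tilde u)$ is only an approximate eigenpair of $P_BCP_B$ because $T$ is replaced by $\tilde T$, so the extra compression error must be folded in --- is a genuine subtlety that the paper's proof passes over by writing $T=Q^*AQ$ exactly.
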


\begin{proof}
We start by defining the residual corresponding to the approximate eigenpair $r = A\tilde{u}-\tilde{\lambda}B\tilde{u}$. We first start with the proof that $\norm{r}{B^{-1}} \leq {2\varepsilon} $. By definition, $\norm{r}{B^{-1}}  = \normtwo{B^{-1/2}r}$. Plugging in the expression for $r$, we have

 \[\normtwo{B^{-1/2}r} = \normtwo{B^{1/2}(C\tilde{u}- \tilde{\lambda}\tilde{u})} = \normB{C\tilde{u}-\tilde{\lambda}\tilde{u}} \]
   Also, in a slight change of notation from Algorithm~\ref{alg:doublepass}, we denote the approximate eigenpairs by $(\tilde{\lambda}_i,\tilde{u}_i)$ for $i=1,\dots,r$ to distinguish it from the exact eigenpair $(\lambda,u)$. We have $T=Q^*AQ = S\tilde{\Lambda}S^*$ and $\tilde{U} = QS$. We make the following observations: 1) $P_BCP_B = QQ^*AQQ^*B = \tilde{U} \tilde{\Lambda} \tilde{U}^*B $ and 2) since $\tilde{u}$ is a column of the B-orthonormal matrix $\tilde{U}$, we have $\tilde{\lambda}\tilde{u} = \tilde{U} \tilde{\Lambda} \tilde{U}^*B \tilde{u}$

   Using these observations,
\[ \normB{C\tilde{u}-\tilde{\lambda}\tilde{u}} = \normB{C\tilde{u}-\tilde{U}\tilde{\Lambda}\tilde{U}^*B\tilde{u}} \leq \normB{C-\tilde{U}\tilde{\Lambda}\tilde{U}^*B} = \normB{C-P_BCP_B} \leq 2\varepsilon\]

Then from~\cite[Section 7.1, chapter 5]{bai1987templates}, we have the following relations
\[ |\lambda-\tilde{\lambda}| \leq \norm{r}{B^{-1}} \qquad|\lambda-\tilde{\lambda}| \leq \frac{\norm{r}{B^{-1}}^2}{\delta} \qquad \sin\angle_B (u,\tilde{u}) \leq \frac{\norm{r}{B^{-1}}}{\delta}\]
The proof is completed by plugging in the inequality $\norm{r}{B^{-1}} \leq 2\varepsilon$.
\end{proof}


The error in low-rank representation is not the only factor that controls the error in the eigenvalues calculations. Proposition suggests that the accuracy is also determined by an additional parameter called the \textit{spectral gap} $\delta$, defined as the gap between the approximate eigenvalue $\tilde{\lambda}$ and any other eigenvalue. When the eigenvalues are clustered, the spectral gap is small and the eigenvalue calculations are accurate as long as the error in the low-rank representation is small. However, in this case the resulting eigenvector calculations maybe inaccurate because the parameter $\delta$ appears in the denominator for the approximation of the angle between the true and approximate eigenvector.

The following result provides an upper bound for the difference in the eigenvalues computed using the two-pass and single-pass algorithms as described in Algorithm~\ref{alg:doublepass} and Algorithm~\ref{alg:singlepass} respectively. Numerical results confirm that typically, the two-pass algorithm is more accurate than the single pass algorithm.

\begin{theorem}\label{thm:singlepasserr}
Let $\tilde{T}$ be computed using the expression $\tilde{T} = (\Omega^*BQ)^{-1}(\Omega^*\bar{Y})(Q^*B\Omega)^{-1}$. Furthermore, assume that $Q$ satisfies the error bound $\normB{(I-QQ^*B)C} \leq \varepsilon$. Label the  eigenvalues of $T = Q^*AQ$ as $\mu_1,\dots,\mu_{k+p}$ and the eigenvalues of $\tilde{T}$ as $\theta_1,\dots,\theta_{k+p}$. The eigenvalues $\mu_j$ and $\theta_j$ for $j=1,\dots,k+p$ are related by the inequality
\[ |\mu_j-\theta_j | \leq 2\varepsilon\sqrt{\kappa(B)} \frac{\sigma_\text{max}^2 (\Omega)}{\sigma_\text{min}^2 (F)} \]
where, $F \define Q^*B\Omega$ and $\kappa(B) = \normtwo{B}\normtwo{B^{-1}}$ is the condition number of the matrix B.
\end{theorem}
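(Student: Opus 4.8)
The plan is to reduce the claim to a standard Hermitian eigenvalue perturbation estimate applied to the pair $T$ and $\tilde T$. First I would verify that both matrices are Hermitian: $T = Q^*AQ$ is Hermitian because $A=A^*$, and $\tilde T = F^{-*}(\Omega^*\bar Y)F^{-1} = F^{-*}(\Omega^*A\Omega)F^{-1}$ is Hermitian because $\Omega^*A\Omega$ is Hermitian and $F^* = \Omega^*BQ$. Ordering the eigenvalues of each matrix consistently (say, decreasingly), Weyl's inequality then yields the per-index bound $|\mu_j - \theta_j| \le \normtwo{T-\tilde T}$, so the whole argument reduces to bounding $\normtwo{T-\tilde T}$.

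Next I would expose the low-rank approximation error inside the perturbation through an algebraic identity. Using $F = Q^*B\Omega$ and $Q^*BQ = I$, one checks that $F^*TF = \Omega^*(BQQ^*)A(QQ^*B)\Omega = \Omega^* P_B^* A P_B\Omega$, so that $\tilde T - T = F^{-*}\Omega^*(A - P_B^*AP_B)\Omega F^{-1}$. The key observation is that $A - P_B^*AP_B = B(C - P_BCP_B)$, since $A = BC$ and $P_B^*AP_B = BP_BCP_B$. This ties the perturbation directly to the symmetric low-rank residual already analyzed in~\eqref{eqn:symmapprox}, for which $\normB{C - P_BCP_B} \le 2\varepsilon$ under the stated hypothesis $\normB{(I-QQ^*B)C}\le\varepsilon$.

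I would then bound $\normtwo{\tilde T - T}$ by submultiplicativity, using $\normtwo{F^{-1}} = 1/\sigma_\text{min}(F)$ and $\normtwo{\Omega} = \sigma_\text{max}(\Omega)$, to obtain $\normtwo{\tilde T - T} \le \frac{\sigma_\text{max}^2(\Omega)}{\sigma_\text{min}^2(F)}\,\normtwo{B(C-P_BCP_B)}$. The remaining factor is the crux: the a priori error guarantee is available only in the $B$-norm, whereas the quantity appearing here is an ordinary spectral norm. Writing $M = C-P_BCP_B$ and $\hat M = B^{1/2}MB^{-1/2}$, the conversion relation~\eqref{eqn:prop2} gives $\normtwo{\hat M} = \normB{M} \le 2\varepsilon$, and the norm equivalence~\eqref{eqn:Bnorm2normineq} lets me pass between $\normtwo{M}$ and $\normB{M}$ at the cost of the condition number, $\normtwo{M} \le \sqrt{\kappa(B)}\,\normB{M}$. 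Tracking these conversions through the identity $BM = B^{1/2}\hat M B^{1/2}$ and balancing the powers of $B^{1/2}$ distributed among $\Omega$, $F$, and $M$ is what produces the factor $\sqrt{\kappa(B)}$ and closes the chain to $|\mu_j-\theta_j| \le 2\varepsilon\sqrt{\kappa(B)}\,\sigma_\text{max}^2(\Omega)/\sigma_\text{min}^2(F)$.

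The hard part will be exactly this last conversion step. The perturbation $\tilde T - T$ carries a genuine factor of $B$ (entering through $A = BC$), while the accuracy guarantee lives in the $B$-inner-product norm, so the bookkeeping must distribute the half-powers $B^{1/2}$ and $B^{-1/2}$ among the three factors $\Omega$, $F$, and the residual $M$ so that precisely the condition-number factor $\sqrt{\kappa(B)}$ survives rather than a bare $\normtwo{B}$. A secondary item requiring care is justifying the Hermitian structure of the single-pass $\tilde T$ and matching the eigenvalues of $T$ and $\tilde T$ in the correct order, so that Weyl's theorem delivers the stated index-by-index estimate and not merely a bound on extreme eigenvalue gaps.
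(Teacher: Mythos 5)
Your plan follows the paper's proof essentially step for step: both reduce the claim to bounding $\normtwo{T-\tilde T}$ via a Hermitian perturbation theorem (you invoke Weyl's inequality after checking that $\tilde T$ is Hermitian, the paper invokes Bauer--Fike plus the orthonormality of the eigenvectors of $T$; your choice is actually the cleaner one for the stated index-by-index pairing), both use the identity $F^*TF=\Omega^*(BQ)T(BQ)^*\Omega$ to write $T-\tilde T=F^{-*}\Omega^*\bigl((BQ)T(BQ)^*-A\bigr)\Omega F^{-1}$, and both then combine submultiplicativity, the bound $\normB{C-P_BCP_B}\le 2\varepsilon$ from \eqref{eqn:symmapprox}, and the norm equivalence $\normtwo{M}\le\sqrt{\kappa(B)}\,\normB{M}$.

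The step you defer --- converting the $B$-norm guarantee on $C-P_BCP_B$ into a spectral-norm bound on $A-(BQ)T(BQ)^*=B(C-P_BCP_B)$ --- is indeed the crux, and as written your bookkeeping will not produce the stated constant. Setting $\hat M=B^{1/2}(C-P_BCP_B)B^{-1/2}$, so that $\normtwo{\hat M}=\normB{C-P_BCP_B}\le 2\varepsilon$, the identity $B(C-P_BCP_B)=B^{1/2}\hat MB^{1/2}$ gives $\normtwo{B(C-P_BCP_B)}\le 2\varepsilon\normtwo{B}$, and distributing the half-powers of $B$ onto $\Omega F^{-1}$ instead yields exactly the same $\normtwo{B}$ factor; there is no allocation of $B^{1/2}$'s among $\Omega$, $F$ and $\hat M$ that turns $\normtwo{B}$ into $\sqrt{\kappa(B)}=\sqrt{\normtwo{B}\normtwo{B^{-1}}}$, and the two quantities are not comparable in general. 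The paper reaches $\sqrt{\kappa(B)}$ only by first asserting $\normB{A-(BQ)T(BQ)^*}\le 2\varepsilon$, which does not follow from \eqref{eqn:symmapprox} either, since left-multiplication by $B$ costs a factor $\normtwo{B}$ in the $B$-norm as well. So the honest output of this (shared) line of argument is $|\mu_j-\theta_j|\le 2\varepsilon\,\normtwo{B}\,\sigma_{\max}^2(\Omega)/\sigma_{\min}^2(F)$; you recover the theorem's constant only under a normalization such as $\normtwo{B}\le\sqrt{\kappa(B)}$. Everything else in your proposal (the Hermitian structure of $\tilde T$, the eigenvalue pairing, the $\sigma_{\max}(\Omega)/\sigma_{\min}(F)$ factors) is correct and matches the paper.
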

\begin{proof}
We start with bounding the error  $\normtwo{T-\tilde{T}}$, where $T=Q^*AQ$.
\begin{align*}
\normtwo{T-\tilde{T}} \quad = & \quad \normtwo{F^{-*}F^* T FF^{-1} -F^{-*}(\Omega^*A\Omega )F^{-1} }  \\ \nonumber
		= & \quad \normtwo{ F^{-*}\Omega^*BQ(Q^*AQ)Q^*B\Omega F^{-1}- F^{-*}\Omega^*A\Omega F^{-1} } \\ \nonumber
		\leq & \quad \normtwo{A - BQ(Q^*AQ)(BQ)^*}\normtwo{\Omega F^{-1}}^2
\end{align*}
From the the assumption that $\normB{(I-QQ^*B)C} \leq \varepsilon$ and Equation~\eqref{eqn:symmapprox} and  we have that $\normB{A-BQ(Q^*AQ)(BQ)^*} \leq 2\varepsilon$. For a matrix $M$, it can be shown that
\[  \frac{\normtwo{M}}{\sqrt{\kappa(B)}}\quad  \leq \quad  \normB{M} \quad \leq \quad  \sqrt{\kappa(B)} \normtwo{M}\]
As a result, $\normtwo{A-BQ(Q^*AQ)(BQ)^*} \leq 2\varepsilon\sqrt{\kappa(B)} $. Finally, putting it all together,
\[ \normtwo{T-\tilde{T}} \quad \leq\quad  2\varepsilon \sqrt{\kappa(B)} \frac{\sigma_\text{max}^2 (\Omega)}{\sigma_\text{min}^2 (F)} \]
 Finally, applying the Bauer-Fike Theorem~\cite[Theorem 3.6]{saad1992numerical}, and using the fact that matrix $T$ is symmetric and has an orthonormal eigenvectors, we have the desired result.
\end{proof}

The error bound in Theorem~\ref{thm:singlepasserr} provides insight into the error made using the single pass approximation. As a consequence, it is important to understand that the error in the single pass approximation can significantly degrade the approximation of the eigenvalues. The terms that contribute are 1) error in the low-rank decomposition $\varepsilon$, 2) ill-conditioned matrices $B$, and 3) large $\sigma_\text{max} (\Omega)$ and small $\sigma_\text{min} (F)$. The largest singular value of $\Omega$ is asymptotically $\sqrt{n}$ for $k  \ll n$~\cite{halko2011finding}, so the single pass approximation is poor when the sizes of the matrices are large. 

\section{Karhunen-Lo\`{e}ve expansion}\label{sec:kle}

\subsection{Motivation and background}
The Karhunen-Lo\`{e}ve expansion (KLE)~\cite{ghanem1991stochastic} is a representation of a stochastic process as an infinite linear combination of orthogonal functions, analogous to a Fourier series representation of a function. In contrast to a Fourier series where the coefficients are real numbers and the expansion basis consists of sinusoidal functions, the coefficients in the Karhunen-Lo\`{e}ve Theorem are random variables and the expansion basis depends on the process. In fact, the orthogonal basis functions used in this representation are determined by the covariance function of the process.  The random field is characterized by a mean and a covariance function. The KLE requires the computation of eigenpairs, which are derived from an Fredholm integral eigenvalue problem with the covariance function as the kernel. Consider the random field $s(\textbf{x})$, with mean $\mu(\textbf{x})$ and covariance $\kappa(\textbf{x},\textbf{y})$, on the bounded domain $\textbf{x}\in{\cal{D}}$.  The covariance kernel is assumed to be symmetric and positive definite. The KLE can now be written as
\begin{equation}
 \label{eqn:kle}
s(\textbf{x}) = \mu(\textbf{x}) + \sum_{i=1}^\infty \xi_i\sqrt{\lambda_i}\phi_i(\textbf{x})\quad \text{with,}
\end{equation}

\[ \mu(\bx)=E[s(\textbf{x})], \qquad \xi_i = \frac{1}{\sqrt{\lambda_i}}\int_{ {\cal{D}}} (s(\textbf{x})-\mu(\textbf{x}) )\phi_i(\textbf{x}) d\bx\]
Here, $\xi_i$ are uncorrelated random variables, $(\lambda_i,\phi_i(\textbf{x}))$ are the eigenpair obtained as the solution to the Fredholm integral equation of the second kind
\begin{equation}
 \label{eqn:intkle}
\int_{\cal{D}} \kappa(\textbf{x},\textbf{y})\phi(\textbf{y})d\textbf{y} = \lambda \phi(\textbf{x})
\end{equation}
Since the covariance $\kappa(\cdot,\cdot)$ is symmetric and positive definite, the eigenfunctions $\phi_i(\cdot)$ are mutually orthogonal and form a basis for $L^2({\cal{D}})$ and the eigenvalues $\lambda_i$ are real, non-negative and can be arranged in decreasing order $\lambda_1\geq\lambda_2\geq \dots \geq 0$. If the random field is Gaussian, then $\xi_i \sim{\cal{N}}(0,1)$. Further details are provided in~\cite{ghanem2003stochastic}.

The eigenpair $(\lambda_i,\phi_i(\textbf{x}))$ in the KLE, can be computed by first discretizing the weak form of system of Equations~\eqref{eqn:intkle} (i.e. performing a Galerkin projection) using piecewise linear basis functions and, subsequently solving the linear eigensystem using a generalized eigenvalue solver for symmetric matrices, that requires only matrix-vector products involving the discretized operator. The relevant equations after discretization are
\begin{equation}
\label{eqn:klediscrete}
M\prior M \phi_i = \lambda_i M \phi_i \qquad i = 1,\dots,N
\end{equation}
where, $\prior$ is the covariance matrix that arises from the discrete representation of the Gaussian random field corresponding to the covariance kernel $\kappa(\cdot,\cdot)$. $M$ is the mass matrix $M_{ij} = \int_{\cal{D}} v_i v_j d\textbf{x}$ and $v_i$ are the piecewise linear basis functions. The mass matrix is a discrete representation of the continuous identity operator and hence we expect it to be well-conditioned. We define $A\define M\prior M$ and $B\define M$. We also have that $B^{-1}A = \prior M$ is symmetric with respect to $M$-inner products. The KLE is truncated to a finite number terms $K$, which typically far fewer than the number of basis functions and independent of it. The number of terms retained in the series depends on the decay of the eigenvalues, which, in turn depends on the smoothness of the covariance kernel~\cite{schwab2006karhunen}. When the kernel is piecewise smooth, then the eigenvalues decay algebraically, and when the kernel is piecewise analytic then the decay is exponential. This GHEP nicely fits the requirements of the randomized algorithm, since it has rapidly decaying eigenvalues.

\subsection{Accuracy of the eigenvalue calculations}\label{sec:accuracy}
We consider three different covariance kernels chosen from the Mat\'{e}rn covariance family with $d = \normtwo{\bx-\textbf{y}}/l$
\begin{equation}
\label{eqn:matern3}
\kappa_\nu (\bx,\textbf{y})  = \left\{ \begin{array}{ll} \exp(-d)  & \quad \nu = 1/2\\ (1+\sqrt{3}d)\exp(-\sqrt{3}d) & \quad \nu = 3/2 \\ (1+\sqrt{5}d + \frac{5}{3}d^2)\exp(-\sqrt{5}d) & \quad \nu = 5/2 \end{array} \right.
\end{equation}
In the rest of this subsection, we consider the KLE corresponding the covariance kernels defined in Equation~\eqref{eqn:matern3} defined on the domain $x \in [-1,1]$ with the length scale parameter chosen to be $l=2$. The domain has been discretized using $201$ grid points. We deliberately chose a small problem to compare the accuracy against the results obtained from direct algorithms. We note that the rate of decay of eigenvalues is higher for covariance kernels with increasing values of $\nu$, thus providing a wide range of eigenvalue decays to study the performance of our algorithm.
\begin{table}[!ht]\centering
 \begin{tabular}{|c|c|c|c|c|}\hline
Kernel & $\normtwo{QR-Y}$ & $\normtwo{Q^*BQ - I}$ & $\normtwo{Q^*BY - R}$ & $\normtwo{YR^{-1}-Q}$ \\ \hline
\multicolumn{5}{|c|}{Modified Gram-Schmidt} \\ \hline
$\kappa_{1/2}(r) $ & $1.8\times 10^{-15}$ & $1.1\times 10^{-11}$ & $1.7\times 10^{-11}$ & $5.5\times 10^{-11}$ \\
$\kappa_{3/2}(r) $ &
$2.3 \times 10^{-15}$ &
$1.3 \times 10^{-7}$ &
$2.4 \times 10^{-7}$ &
$8.8 \times 10^{-7}$ \\
$\kappa_{5/2}(r) $ &
$2.2 \times 10^{-15}$ &
$6.1  \times 10^{-4}$ &
$1.2  \times 10^{-3}$ &
$5.4  \times 10^{-3}$  \\ \hline
\multicolumn{5}{|c|}{Modified Gram-Schmidt with re-orthogonalization} \\ \hline
$\kappa_{1/2}(r) $ &
$1.7 \times 10^{-15}$ &
$1.5 \times 10^{-15}$ &
$1.5 \times 10^{-15}$ &
$5.8 \times 10^{-11}$ \\
$\kappa_{3/2}(r) $ &
$2.1 \times 10^{-15}$ &
$1.1 \times 10^{-15}$ &
$1.0 \times 10^{-15}$ &
$8.2 \times 10^{-7}$ \\
$\kappa_{5/2}(r) $ &
$2.3 \times 10^{-15}$ &
$1.7 \times 10^{-15}$ &
$1.0 \times 10^{-15}$ &
$5.6 \times 10^{-3}$ \\ \hline
\multicolumn{5}{|c|}{PreCholQR} \\ \hline
$\kappa_{1/2}(r)$ &
$1.06\times 10^{-14}$ &
$1.17\times 10^{-15}$ &
$9.84\times 10^{-16}$ &
$1.43\times 10^{-10}$ \\ 
$\kappa_{3/2}(r)$ &
$9.06\times 10^{-15}$ &
$1.11\times 10^{-15}$&
$ 7.01 \times 10^{-16}$ &
$2.79\times 10^{-06}$ \\ 
$\kappa_{5/2}(r) $ &
$9.78 \times 10^{-15}$ & 
$1.15\times 10^{-15}$ &
$8.78\times 10^{-16}$ & 
 $2.8\times 10^{-2}$ \\ \hline
\end{tabular}
\caption{Comparison of the algorithms for computing the QR decomposition of $Y = B^{-1}A\Omega$ where $\Omega \in \mathbb{R}^{201\times 100}$ with i.i.d. entries chosen from ${\cal{N}} (0,1)$. Further, $A=M\prior M$ and $B=M$.   }
\label{tab:mgs}
\end{table}

\subsubsection{Accuracy of QR with weighted inner product:}  We compare the algorithms for computing the QR decomposition of $Y = B^{-1}A\Omega$ where $\Omega \in \mathbb{R}^{201\times 100}$ with i.i.d. entries chosen from ${\cal{N}} (0,1)$. For the modified Gram-Schmidt algorithm (MGS), we consider the algorithm in~\cite{grimes1994shifted} without additional re-orthogonalization. For the algorithm with re-orthogonalization (MGS-R) we consider the one proposed in Algorithm~\ref{alg:mgs}. We compare the following metrics: $\normtwo{QR-Y}$,  $\normtwo{Q^*BQ - I}$, $\normtwo{Q^*BY - R}$  and $\normtwo{YR^{-1}-Q}$. If the quantities were computed in exact arithmetic, they would all be identically zero. However, in the presence of round-off errors, these quantities are not numerically zero. We compare the results for three different covariance kernels defined in~\eqref{eqn:matern3} and we have $A=MQM$ and $B=M$. The results are summarized in Table~\ref{tab:mgs}. We clearly see that as $\nu$ increases the eigenvalues of the KLE decay rapidly, as a result $Y$ becomes more and more ill-conditioned. Applying the algorithm MGS results in the quantity $\normtwo{QR-Y}$ being satisfied to nearly machine precision. However, the other metrics  $\normtwo{Q^*BQ - I}$, $\normtwo{Q^*BY - R}$  and $\normtwo{YR^{-1}-Q}$ perform badly as $\nu$ increases. On the other hand, for the re-orthogonalized MGS (MGS-R) the quantities $\normtwo{QR-Y}$,  $\normtwo{Q^*BQ - I}$ and $\normtwo{Q^*BQ - R}$ are satisfied to nearly machine precision. However, like MGS $\normtwo{YR^{-1}-Q}$ is higher because $R$ is close to singular. It is clear that  while re-orthogonalization has a significant effect on the orthogonality of $Q$, it comes at a higher expense because of additional re-orthogonalization. The accuracy of `PreCholQR' is comparable with MGS-R. Unless mentioned explicitly we use MGS-R throughout this section for all the numerical experiments.

\begin{figure}[!ht]
\centering
\includegraphics[scale=0.45]{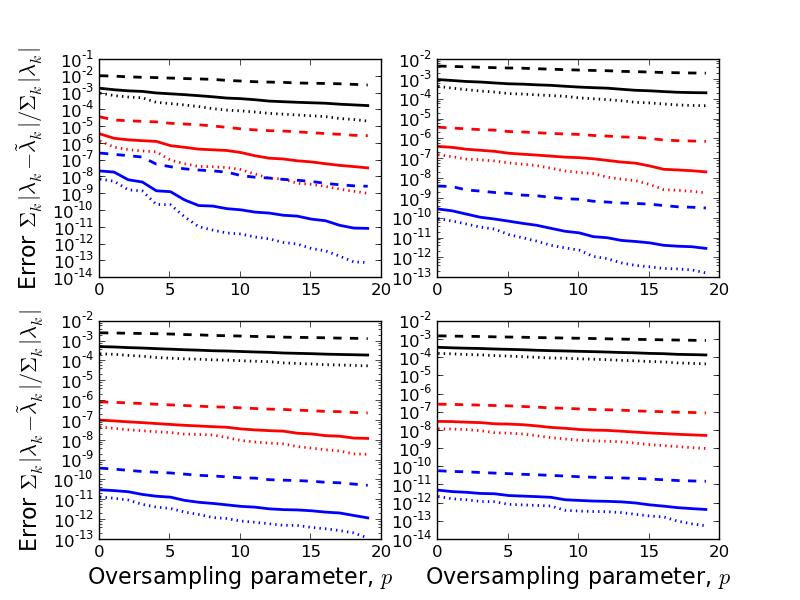}
\caption{Comparison of the error between the true eigenvalues $\lambda_k$ and the approximate eigenvalues $\tilde\lambda_k$ as a function of oversampling parameter $p$ for each of the covariance kernels defined in Equation~\eqref{eqn:matern3} - $\kappa_{1/2}$ (black), $\kappa_{3/2}$ (red) and $\kappa_{5/2}$ (blue). The plots correspond to $k=20,40,60$ and $80$. 2-pass algorithm (solid line) refers to Algorithm~\ref{alg:doublepass},  1-pass (dashed line) algorithm refers to Algorithm~\ref{alg:singlepass} and Nystr\"{o}m algorithm (dotted line) refers to Algorithm~\ref{alg:nystrom}.
 }
\label{fig:errcovker}
\end{figure}

\subsubsection{Effect of oversampling parameter $p$:} We consider the effect of the oversampling parameter $p$ on the accuracy of the low-rank approximation and the computed eigenvalues. We plot (in Figure~\ref{fig:errcovker}) the error using two-pass, single-pass  and Nystr\"{o}m algorithms applied to all three covariance kernels defined in Equation~\ref{eqn:matern3} as a function of the oversampling parameter $p$. For fairness in comparison, to eliminate the effect of random sampling, we use the same sequence of pseudo-random numbers while computing the low-rank decomposition. We can see from Theorem~\ref{thm:main} that by increasing $p$, the error of the low-rank estimate improves. However, the rate of improvement of the error with increasing oversampling also seems to increase when the rate of decay of the singular values is higher. This is consistent with the result of Theorem~\ref{thm:main}. However, while the error decreases while using the two-pass and the single-pass algorithms, the rate of improvement of error with increased oversampling is more pronounced in the case of two-pass and Nystr\"{o}m algorithms. This is because, in the single-pass algorithm, an additional error is introduced while converting the low-rank decomposition $A \approx (BQ)(Q^*AQ)(BQ)^*$ to a generalized eigendecomposition of the form $A \approx (BU)\Lambda(BU)^*$. To gain more insight, we consider the error between the matrices $T$ (that is formed exactly in the two-pass and Nystr\"{o}m algorithms) and its approximation $\tilde{T}$ (that is formed in the single-pass algorithm) as a function of oversampling parameter $p$ for each of the covariance kernels. The results are displayed in Figure~\ref{fig:errsinglepass}. The error between $T$ and $\tilde{T}$ decreases with oversampling although slowly.

\begin{figure}[!ht]
\centering
\includegraphics[scale=0.45]{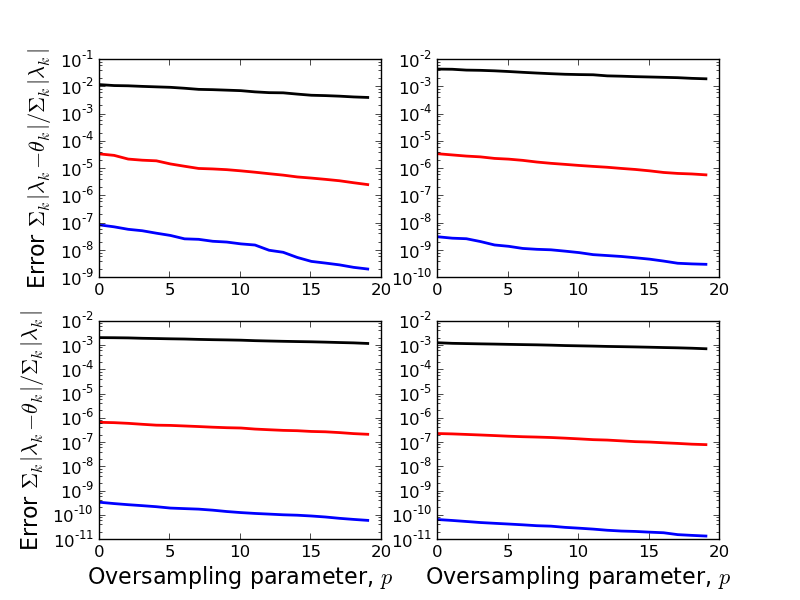}
\caption{Comparison of the error in the approximation of $T$ (that is formed in the two-pass algorithm) and its approximation $\tilde{T}$ (that is formed in the single-pass algorithm) measured as $\sum_k|\lambda_k-\theta_k|/\sum_k|\lambda_k|$ (where $\lambda_k$ and $\theta_k$ are defined in Theorem~\ref{thm:singlepasserr})as a function of oversampling parameter $p$ for each of the covariance kernels defined in Equation~\eqref{eqn:matern3} - $\kappa_{1/2}$ (black), $\kappa_{3/2}$ (red) and $\kappa_{5/2}$ (blue). The plots correspond to $k=20,40,60$ and $80$. }
\label{fig:errsinglepass}
\end{figure}

\begin{figure}[!ht]
\centering
\includegraphics[scale=0.45]{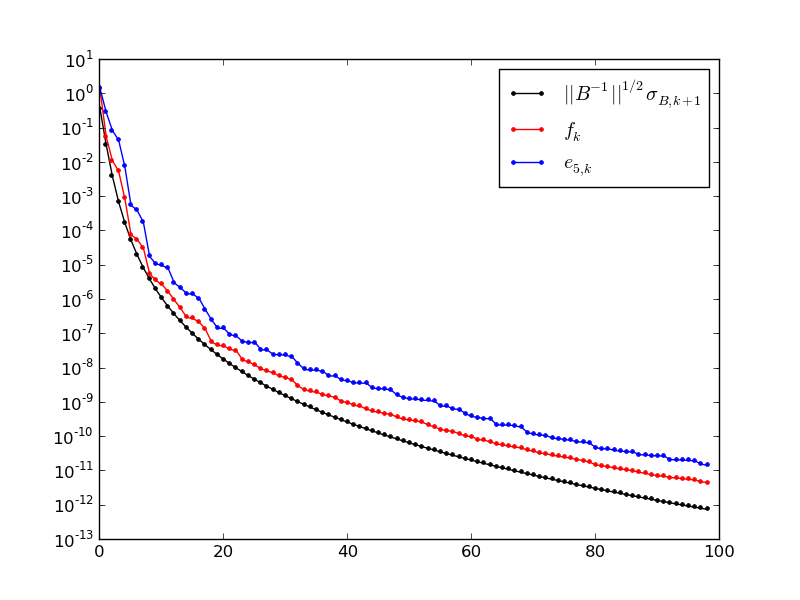}
\caption{Comparison of the actual error in the low-rank representation $f_k$ with the random estimator $e_k$ from Proposition~\ref{prop:random} and the approximation error from Theorem~\ref{thm:main}. An oversampling factor of $5$ is used and we also choose $r=5$ for the randomized estimator. Here, we use $\kappa_{5/2}$ defined in Equation~\eqref{eqn:matern3}.}
\label{fig:errsvd}
\end{figure}

\subsubsection{Accuracy of the estimator:} Next, we analyze the performance of the proposed estimator for the error in the low rank decomposition $\normB{(I-QQ^*B)C} \leq \varepsilon$. An oversampling factor of $p=5$ was used. We compare the following quantities:
\begin{itemize}
\item $\sqrt{\normtwo{B^{-1}}}\sigma_{B,k+1}(C)$, where $(k+1)$ generalized singular value of the matrix $C$. This is, roughly speaking, an estimate of the error according to Theorem~\ref{thm:main}.
\item The actual error in the low-rank approximation $f_k = \normB{(I-Q_kQ_k^*B)C}$.
\item Estimator of the error $f_k$ computed using the result in Proposition~\ref{prop:random}, and is denoted as $e_{5,k}$. We pick $\alpha = 2$ and $r=5$.
\end{itemize}

Figure~\ref{fig:errsvd} shows the comparison between the three quantities listed above. We observe that the error in the low-rank approximation $f_k$ is nearly equal to the estimate $\sqrt{\normtwo{B^{-1}}}\sigma_{B,k+1}(C)$ that is predicted from theory. Moreover, the true error is bounded from above by the estimated error $e_k$ and hence, the estimator provides a good upper bound for the actual error. Next, we try to answer the following question: How often (statistically speaking) is the estimator for the error close to the true error? To answer this, we generate $1000$ realizations at different values of $k=20,40,60,80$ and compare the true error with the estimated error. The results are presented in Figure~\ref{fig:errdist}. It can be seen that both the actual and the estimated error are concentrated about the mean.

\begin{figure}[!ht]
\centering
\includegraphics[scale=0.45]{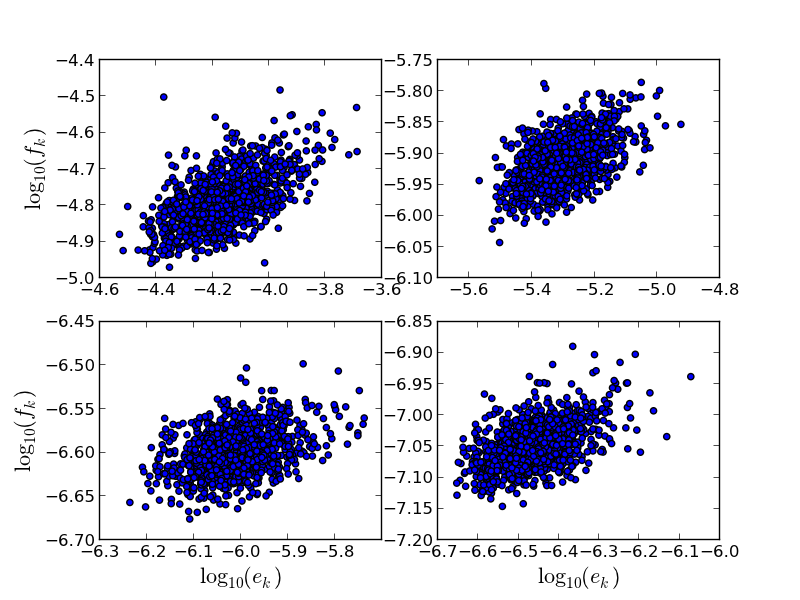}
\caption{Distribution of the true and the estimated error generated for $1000$ samples corresponding to $k=20,40,60,80$ eigenvalues. Here, $f_k$ is the  actual error in the low-rank representation  and the random estimator $e_k$ from Proposition~\ref{prop:random}. We use $\kappa_{3/2}$ defined in Equation~\eqref{eqn:matern3}. }
\label{fig:errdist}
\end{figure}

\subsubsection{Effect of correlation length $l$:} The rate of decay of eigenvalues is controlled by the smoothness of the kernel~\cite{schwab2006karhunen}. Additionally, the rate of decay is also dependent on the correlation length $l$, which appears in Equation~\eqref{eqn:matern3} through the distance function $d = \normtwo{\bx -\textbf{y}} / l$. As has been observed in~\cite{cliffe2011multilevel}, for small correlation lengths there is a pre-asymptotic regime before there is a significant decay rate of the eigenvalues. To demonstrate the effect of correlation length $l$ on the accuracy of the randomized calculations, we consider the following numerical experiment. The eigenvalues are computed for the KLE using the covariance kernel $\kappa_{\nu=5/2}$ as defined in Equation~\eqref{eqn:matern3}.  The domain for the computations is  $[-1,1]$ and the number of grid points are $501$. The true and approximate eigenvalues are displayed in Figure~\ref{fig:corrlength} for 3 different correlation lengths $l=[0.01,0.1,1]$. Also plotted is the error between the true and approximate eigenvalues measured as $\sum_k|\lambda_k-\tilde\lambda_|/\sum_k |\lambda_k|$. From the figure, it can be seen that there is no appreciable decay in the eigenvalues for small correlation lengths $0.5\%$ of domain length. However, for correlation lengths that are greater than $5\%$ of the domain length, which is typically used in practice, the accuracy of the eigenvalue calculations is moderate and improves significantly with increasing correlation length. It should be noted that the randomized algorithms may not be very accurate for extremely small correlation lengths.

\begin{figure}[!ht]
\centering
\includegraphics[scale=0.29]{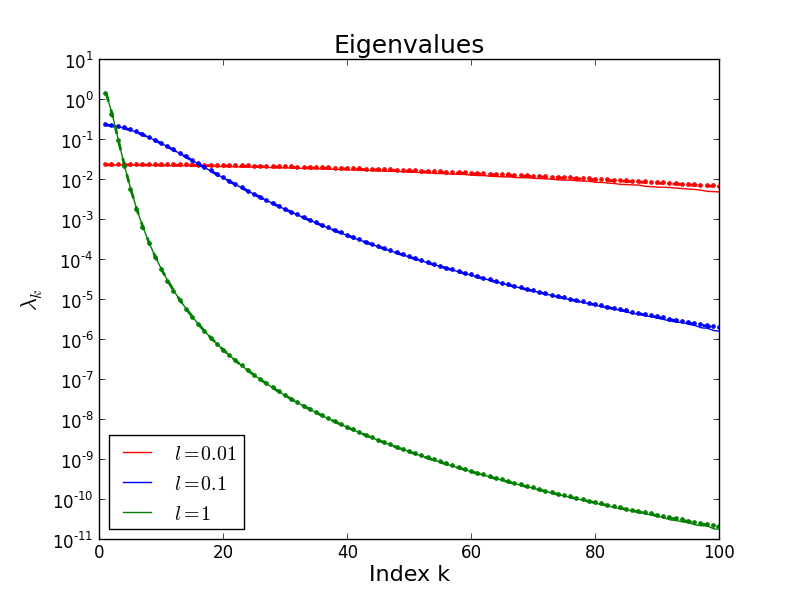}
\includegraphics[scale=0.29]{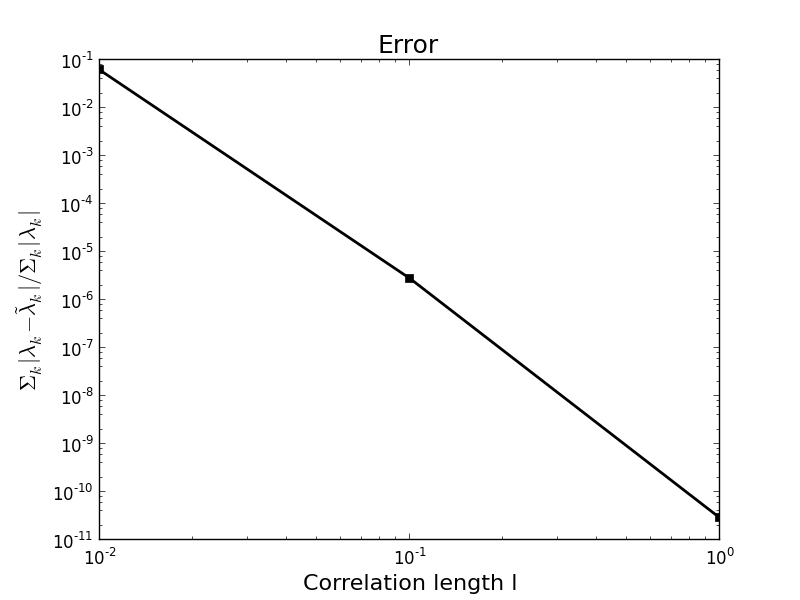}
\caption{Effect of correlation length on accuracy of eigenvalues.  We use $\kappa_{5/2}$ as the covariance kernel as defined in Equation~\eqref{eqn:matern3}. (left) comparison between the true eigenvalues (solid line) and approximate eigenvalues (dot-dashed line) computed for different correlation lengths (right) the error between the true and the approximate eigenvalues measured as $\sum_k|\lambda_k-\tilde\lambda_|/\sum_k |\lambda_k|$ as a function of correlation length. }
\label{fig:corrlength}
\end{figure}•

\subsubsection{Accuracy of the KL expansion:} Thus far, we have established the accuracy of the eigenvalues using the randomized approach. However, the accuracy of the KL expansion depends on both the accuracy of the eigenvalues and the eigenvectors. The accuracy of the truncated discrete KL expansion can be quantified using the following theorem.

\begin{theorem}\label{thm:kleerror}
Let $(\lambda,\phi)$ be the exact eigenpair of Equation~\eqref{eqn:intkle} and the $(\tilde\lambda,\tilde\phi)$ be the approximate eigenpair computed using the Randomized algorithms. Assume that $\arcsin (2\varepsilon/\delta) < \pi/2$
\[ \mathbb{E}\left[\left\lVert \sum_{k=1}^n \xi_k\left( \sqrt{\lambda_k}\phi -  \sqrt{\tilde\lambda_k} \tilde\phi\right) \right\rVert_M^2 \right]  \quad \lessapprox \quad  n\min\left\{2\varepsilon,\frac{2\varepsilon}{\delta}  \right\} + \sum_{k=1}^n \lambda_k \frac{4\varepsilon^2}{\delta^2}\]
Here the expectation $\mathbb{E}[\cdot]$ is w.r.t to the random variables $\xi_k$.
\end{theorem}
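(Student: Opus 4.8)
The plan is to separate the randomness in the coefficients $\xi_k$ from the deterministic eigenpair errors. Since the expectation is taken only over the uncorrelated, zero-mean, unit-variance $\xi_k$, I treat the exact eigenpairs $(\lambda_k,\phi_k)$ and the computed eigenpairs $(\tilde\lambda_k,\tilde\phi_k)$ as fixed. Setting $v_k \define \sqrt{\lambda_k}\phi_k - \sqrt{\tilde\lambda_k}\tilde\phi_k$, I would first write $\norm{\sum_k \xi_k v_k}{M}^2 = \sum_{k,j}\xi_k\xi_j\langle v_k,v_j\rangle_M$ and take expectations. Because $\mathbb{E}[\xi_k\xi_j]$ equals $1$ when $k=j$ and vanishes otherwise, every cross term drops out and the quantity of interest collapses to $\sum_{k=1}^n \norm{v_k}{M}^2$. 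This reduces the theorem to a termwise deterministic bound on $\norm{v_k}{M}^2$.

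For the termwise bound I would use that, in the KLE discretization, $B = M$ and both the exact and the computed eigenvectors are $M$-orthonormal, so $\norm{\phi_k}{M} = \norm{\tilde\phi_k}{M} = 1$. After fixing the sign of $\tilde\phi_k$ so that $\langle \phi_k,\tilde\phi_k\rangle_M = \cos\theta_k \ge 0$, where $\theta_k \define \angle_M(\phi_k,\tilde\phi_k)$, expanding the square gives the clean decomposition $\norm{v_k}{M}^2 = (\sqrt{\lambda_k}-\sqrt{\tilde\lambda_k})^2 + 2\sqrt{\lambda_k\tilde\lambda_k}\,(1-\cos\theta_k)$, which isolates an eigenvalue-error contribution from an eigenvector-error contribution.

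Each contribution is then controlled by Proposition~\ref{prop:apost}. For the first, the elementary inequality $(\sqrt{\lambda_k}-\sqrt{\tilde\lambda_k})^2 \le |\lambda_k-\tilde\lambda_k|$ (valid because $|\lambda_k-\tilde\lambda_k| \le \lambda_k + \tilde\lambda_k$ for non-negative eigenvalues) together with the eigenvalue bound $|\lambda_k-\tilde\lambda_k|\le\min\{2\varepsilon,4\varepsilon^2/\delta\}$ gives a termwise bound that, summed over the $n$ modes, produces $n\min\{2\varepsilon,4\varepsilon^2/\delta\}$, which is dominated by the stated $n\min\{2\varepsilon,2\varepsilon/\delta\}$ once $\varepsilon\le 1/2$. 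For the second, the hypothesis $\arcsin(2\varepsilon/\delta)<\pi/2$ forces $\theta_k<\pi/2$, so $\cos\theta_k\ge 0$ and $1-\cos\theta_k \approx \tfrac12\sin^2\theta_k$; combining this with $\sin\theta_k = \sin\angle_M(\phi_k,\tilde\phi_k)\le 2\varepsilon/\delta$ and the near-equality $\sqrt{\lambda_k\tilde\lambda_k}\approx\lambda_k$ yields $2\sqrt{\lambda_k\tilde\lambda_k}(1-\cos\theta_k)\lessapprox\lambda_k\,4\varepsilon^2/\delta^2$, which sums to the second term.

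I expect the main obstacle, and the reason the final estimate is stated with $\lessapprox$, to lie in the eigenvector contribution: Proposition~\ref{prop:apost} bounds $\sin\theta_k$, not $1-\cos\theta_k$ directly, so the small-angle estimate $1-\cos\theta_k\approx\tfrac12\sin^2\theta_k$ and the eigenvalue-proximity estimate $\sqrt{\lambda_k\tilde\lambda_k}\approx\lambda_k$ must both be invoked, their accuracy being justified by the smallness of $\varepsilon/\delta$ guaranteed by the hypothesis. The expectation collapse and the eigenvalue-error term are otherwise routine.
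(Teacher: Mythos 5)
Your proposal is correct and follows essentially the same route as the paper: collapse the expectation using $\mathbb{E}[\xi_i\xi_j]=\delta_{ij}$, split each term into an eigenvalue-error piece and an eigenvector-angle piece, and control both with Proposition~\ref{prop:apost} together with the small-angle estimate $1-\cos\theta\approx\tfrac12\sin^2\theta$. The only difference is cosmetic but in your favor: the paper splits through the intermediate vector $\sqrt{\lambda_k}\tilde\phi_k$ using a squared-triangle inequality that as written drops a cross term, whereas your exact identity $\lVert v_k\rVert_M^2=(\sqrt{\lambda_k}-\sqrt{\tilde\lambda_k})^2+2\sqrt{\lambda_k\tilde\lambda_k}(1-\cos\theta_k)$ reaches the same two contributions without that lossy step.
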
•
\begin{proof}Using the property that $\mathbb{E}[\xi_i\xi_j] = \delta_{ij}$ the expression on the left reduces to
\[
  \mathbb{E}\left[\left\lVert \sum_{k=1}^n \xi_k\left( \sqrt{\lambda_k}\phi -  \sqrt{\tilde\lambda_k} \tilde\phi\right) \right\rVert_M^2 \right] = \sum_{k=1}^n  \left\lVert\sqrt{\lambda_k}\phi -  \sqrt{\tilde\lambda_k} \tilde\phi \right\rVert_M^2 \]
Next considering each term in the summation, we have
\begin{align*}
\left\lVert\sqrt{\lambda_k}\phi_k -  \sqrt{\tilde\lambda_k} \tilde\phi_k \right\rVert_M^2 \leq&  \quad \left\lVert\sqrt{\lambda_k}\phi_k -  \sqrt{\lambda_k} \tilde\phi_k \right\rVert_M^2 + \left\lVert\sqrt{\lambda_k}\tilde\phi_k -  \sqrt{\tilde\lambda_k} \tilde\phi_k \right\rVert_M^2 \\
\leq & \quad \lambda_k \norm{\phi_k - \tilde\phi}{M}^2 + |\lambda_k - \tilde\lambda_k| \norm{\tilde\phi}{M}^2
\end{align*}•
We have that $\norm{\tilde\phi_k}{M}^2 = 1$ and $\norm{\phi_k}{M}^2 = 1$ and $\angle_M(\phi_k,\tilde\phi_k) = \arccos \langle \phi, \tilde\phi\rangle_M$. 
\begin{align*}
\norm{\phi_k-\tilde\phi_k}{M}^2 \quad \leq & \quad \norm{\phi_k}{M}^2 + \norm{\tilde\phi_k}{M}^2 - 2\langle \phi, \tilde\phi_k\rangle_M = 2(1-\cos \angle_M(\phi_k,\tilde\phi_k))\\
= & \quad 2 \left( 1 - \sqrt{1 - \sin^2 \angle_M(\phi_k,\tilde\phi_k)}  \right)  \\
\leq &  \quad \left(\frac{2\varepsilon}{\delta}\right)^2  + \mathcal{O} \left(\frac{2\varepsilon}{\delta}\right)^4
\end{align*}•
Here we have used the result of Proposition~\ref{prop:apost} that bounds  $\sin\angle_M(\phi_k,\tilde\phi_k) \leq 2\varepsilon/\delta$. The proof is completed by plugging the above expression into the summation and using the inequality in Proposition~\ref{prop:apost}  $|\lambda_k-\tilde\lambda_k| \leq \min\{2\varepsilon,4\varepsilon^2/\delta \}$.
\end{proof}

\begin{figure}[!ht]
\centering
\includegraphics[scale=0.3]{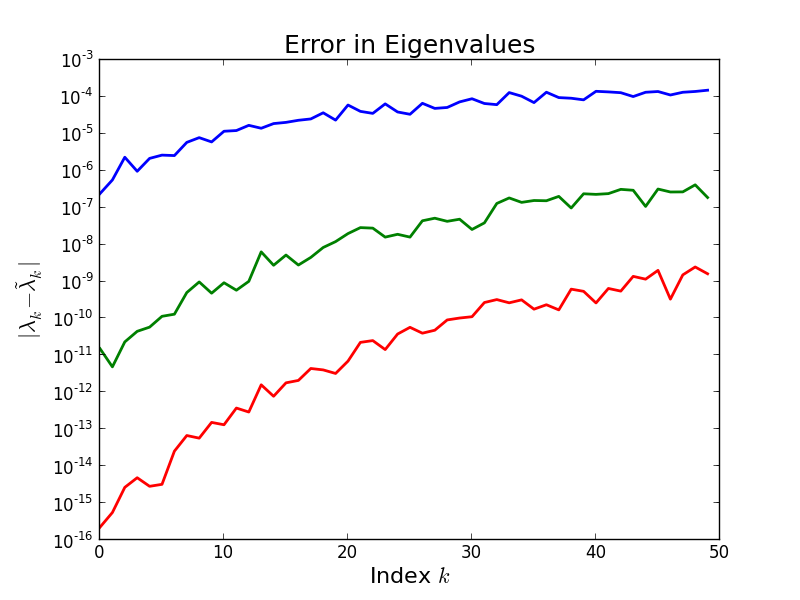}
\includegraphics[scale=0.3]{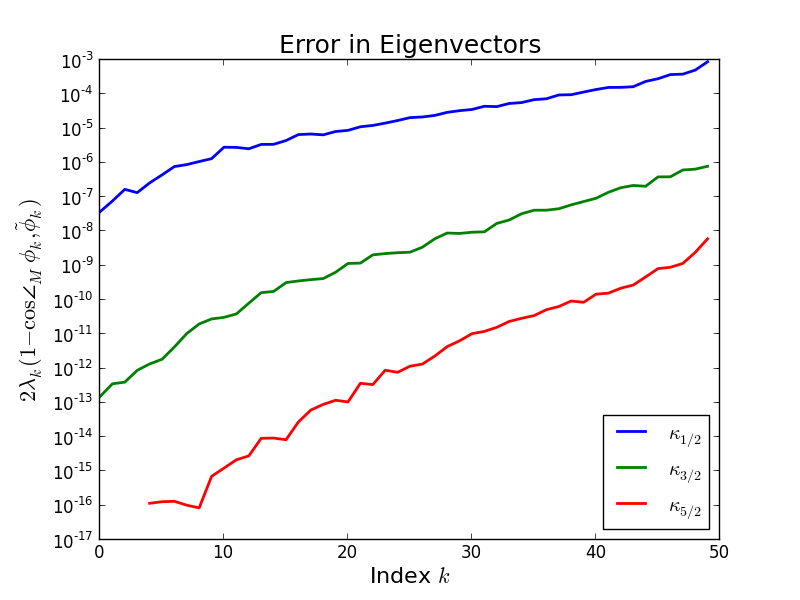}
\caption{(left) Accuracy of the eigenvalues  $\lambda_k - \tilde\lambda_k$ (right) accuracy of the eigenvectors quantified as $\lambda_k 2(1-\cos\angle_M(\phi_k,\tilde\phi_k))$ which appears in the proof of Theorem~\ref{thm:kleerror}. }
\label{fig:kleerror}
\end{figure}•

Estimation of the spectral gap is hard in practice, since the exact eigenvalues are not known. We consider the accuracy of the discretized KL expansion. We consider a 1D KL expansion in the domain $[-1,1]$ discretized using $501$ basis functions. Furthermore, we consider three different Mat\'ern class covariance kernels described in Equation~\eqref{eqn:matern3} and take the correlation length $l = 0.4$.  From the analysis in Theorem~\ref{thm:kleerror}, we have seen that the second factor controlling the error is the accuracy of the eigenvalues $|\lambda_k - \tilde\lambda_k|$  the factor $\lambda_k 2(1-\cos\angle_M(\phi_k,\tilde\phi_k))$ and these quantities have been plotted in Figure~\ref{fig:kleerror}.   From the figure, it can be seen that the error in both the quantities deteriorates with the index number of the eigenvalues $k$ and the accuracy is higher as the parameter $\nu$ increases. Furthermore, the accuracy of both quantities is roughly the same order of magnitude and therefore, both terms have similar contributions to the error in the discretized KL expansion computed using the randomized algorithms described in this paper.

\subsection{Implementation using $\mathcal{H}$-matrices}
Since the matrix $\prior$ is dense, storage and computational costs of matvecs involving the matrix $\prior$ scales as $\bigO(N^2)$. In order to mitigate these costs, ${\cal H}$-matrix approach has previously been used for efficient representation of covariance matrices arising out of Gaussian random fields in~\cite{saibaba2012efficient,ambikasaran2012large,saibaba2012application}. Hierarchical matrices~\cite{borm2003introduction} (or ${\mathcal{H}}$-matrices, for short) are efficient data-sparse representations of certain densely populated matrices. The main idea that is used repeatedly in these kind of techniques, is to split a given matrix into a hierarchy of rectangular blocks and approximate each of the blocks by a low-rank matrix. Hierarchical matrices have been used successfully in data-sparse representation of matrices arising in the Boundary Element method or for the approximation of the inverse of a Finite Element discretization of an elliptic partial differential operator. Fast algorithms have been developed for this class of matrices, including matrix-vector products, matrix addition, multiplication and factorization in almost linear complexity~\cite{borm2003introduction}. The matrix-vector products involving the dense covariance matrix can be computed in $\bigO(N\log N)$ using the ${\cal H}$-matrix approach, where $N$ is the number of grid points after discretization. The use of ${\cal H}$-matrices for computing the KLE along with Krylov subspace methods to compute eigendecomposition has been discussed in~\cite{khoromskij2009application,eiermann2007computational}. The specific details of our implementation of ${\cal{H}}$-matrix approach has already been presented in~\cite{saibaba2012application} and will not be provided here.

The assembly of the finite element matrix corresponding to the mesh is handled using the finite element software FEniCS~\cite{LoggWells2010a}. Since the matrix $M$ is sparse and can easily be factorized, computing the dominant eigenmodes of the eigenvalue problem~\eqref{eqn:klediscrete} can be efficiently computed by a transformation into a HEP. Instead, we only assume that $Mx$ and $M^{-1}x$ can be formed fast. We use this simple example to demonstrate the accuracy and speedup of the randomized algorithm for GHEP. We compare the performance of Algorithm~\ref{alg:doublepass} which is labeled ``Two Pass'', Algorithm~\ref{alg:singlepass} labeled ``Single Pass'' and the solution of the GHEP using ARPACK that is accessed via SciPy and is labeled ``ARPACK''. We warn the reader to exercise caution while interpreting the timing values, since the comparison is made across different programming environments (ARPACK is written in Fortran). Furthermore, any comparison with Krylov subspace methods is complicated by the fact that these methods require sophisticated algorithms for monitoring convergence and restarts.

\begin{figure}[!ht]
\centering
\includegraphics[scale = 0.25]{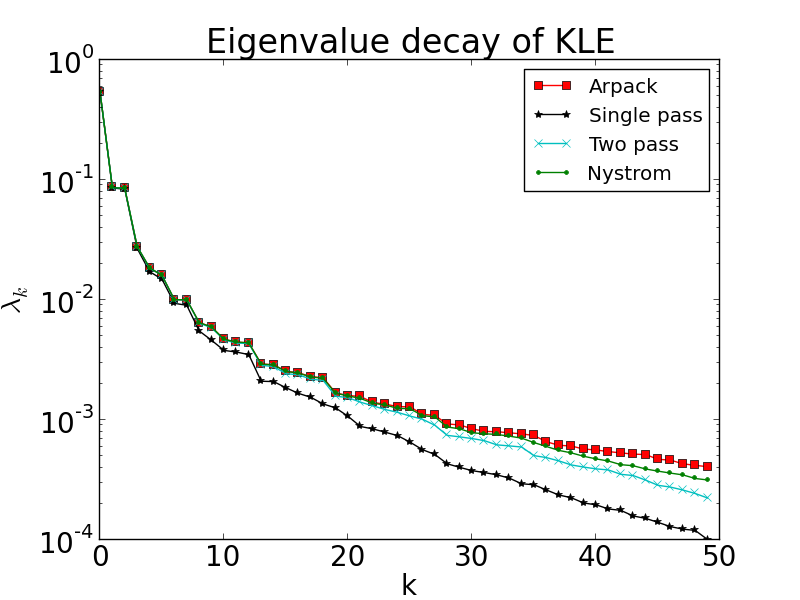}
\includegraphics[scale = 0.25]{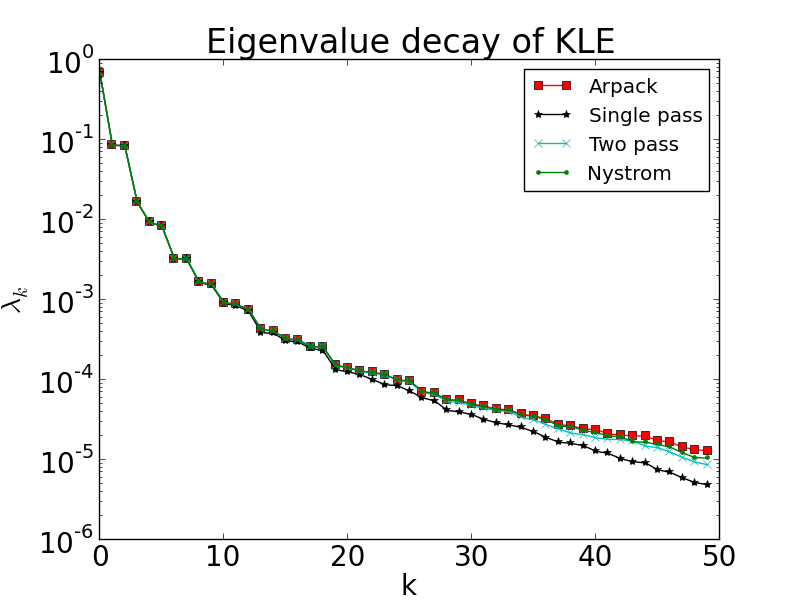}
\includegraphics[scale = 0.25]{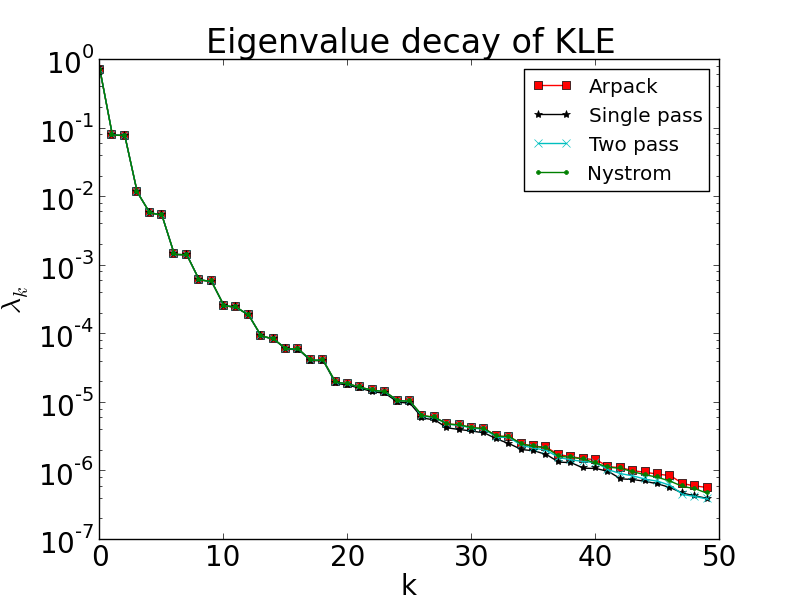}
\caption{Decay in eigenvalues corresponding to the covariance kernels defined in~\eqref{eqn:matern3} }
\label{fig:kleeigen}
\end{figure}

\begin{figure}[!ht]
\centering
\includegraphics[scale = 0.35]{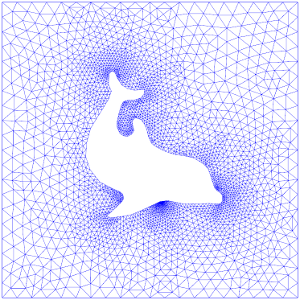}
\caption{Starting mesh used to illustrate the performance of the randomized algorithms to compute KLE.}
\label{fig:dolfin}
\end{figure}

The GHEP listed in Equation~\eqref{eqn:klediscrete} is solved corresponding to the covariance kernels defined in Equation~\eqref{eqn:matern3}. For the mesh, we started with a mesh available in the public domain \footnote{http://fenicsproject.org/download/data.html}. Then using the FEniCS command `refine' twice, we ended up with a finer mesh  with $43872$ nodes corresponding to the irregular domain in Figure~\ref{fig:dolfin}. The time to compute the eigendecomposition, and a summary of the number of matrix-vector products taken by each solver is summarized in Table~\ref{tab:eigen}. An oversampling factor $p=5$ is chosen. The eigenvalues are shown in Figure~\ref{fig:kleeigen}. We observe that even though the single pass algorithm takes half the time as the two pass algorithm (fewer matvecs with $A$), the accuracy of this algorithm deteriorates as the number of requested eigenvalues increase. The difference in computational costs between the randomized algorithms and Krylov subspace based eigensolvers will be much higher if the cost associated with forming $Bx$ or $B^{-1}x$ is much higher. In terms of the accuracy of the eigensolvers, we observe that in general, the ``Nystr\"om'' and ``Two Pass'' algorithms are closer in accuracy compared to the ``ARPACK'' solver; in fact, they are often an order of magnitude more accurate than the ``Single Pass'' algorithms. The accuracy improves when the eigenvalues decay more rapidly, i.e. for covariance matrices $\kappa_\nu$ with increasing $\nu$. Furthermore, we observe that the first few eigenvalues are computed relatively accurately but the accuracy decays towards the tail. This accuracy can be improved by increased oversampling, i.e. using a higher value of $p$. The summary of the computational costs along with CPU time is provided in Table~\ref{tab:eigen}.

\begin{table}[!ht]\centering
 \begin{tabular}{|c|c|c|c|c|c|}\hline
  Method &  $Ax$ & $Bx$ & $B^{-1}x$ & Time (s) & $\sum_k|\lambda_k-\tilde\lambda_k|/\sum_k|\lambda_k|$ \\ \hline
\multicolumn{6}{|c|}{$\kappa_{1/2}(r) = \exp(-r) $} \\ \hline
  Single Pass & $55$ & $157$ & $60$ & $91.49 $ & $3.6\times 10^{-2}$ \\
  Two Pass   & $110$ & $156$ & $55$ & $186.50$ & $7.0\times 10^{-3}$\\
  Nystr\"{o}m & $110$ & $157$ & $157$ & $188.32$&  $2.4\times 10^{-3}$  \\
  ARPACK     &  $ 128$ & $ 256 $& $128$ & $205.37$ &  $-$\\ \hline
\multicolumn{6}{|c|}{$\kappa_{3/2}(r) =  (1+\sqrt{3}r)\exp(-\sqrt{3}r)  $} \\ \hline
  Single Pass & $55$ & $160$ & $55$ & $ 95.40$ & $1.0\times 10^{-3}$ \\
  Two Pass   & $110$ & $162$ & $55$ & $186.68$ & $1.1\times10^{-4} $\\
  Nystr\"{o}m & $110$ & $160$ &  $160$ & $177.70$ & $3.5\times 10^{-5}$\\
  ARPACK     &  $ 102$ & $ 202 $& $102$ & $159.07$ &  $-$\\ \hline
\multicolumn{6}{|c|}{$\kappa_{5/2}(r) =   (1+\sqrt{5}r + \frac{5}{3}r^2)\exp(-\sqrt{5}r)  $} \\ \hline
  Single Pass & $55$ & $161$ & $55$ & $86.25 $ & $3.39\times 10^{-5}$ \\
  Two Pass   & $110$ & $162$ & $55$ & $ 171.72$ &  $4.31\times 10^{-6} $\\
  Nystr\"{o}m & $110$ & $162$ & $162$ & $172.64$ & $1.8\times 10^{-6}$ \\
  ARPACK     &  $ 102$ & $ 201 $& $102$ & $ 155.91$ & $-$\\ \hline
 \end{tabular}
 \caption{Comparison of computational costs of the randomized algorithms ``Single Pass'' (Algorithm~\ref{alg:singlepass}), ``Two Pass'' (Algorithm~\ref{alg:doublepass}) and ``Nystr\"om'' (Algorithm~\ref{alg:nystrom}) with ARPACK which is a standard eigensolver for GHEP. Here, the eigenvalues computed using ARPACK are treated as the ``true'' eigenvalues $\lambda_k$. }
 \label{tab:eigen}
\end{table}

Finally, we conclude this section with a discussion on choosing between randomized algorithms and Krylov subspace methods for computing the dominant eigenmodes of the KLE. As can be seen from Table~\ref{tab:eigen}, in general the single-pass algorithm is nearly twice as cheap compared to either two-pass algorithm or ARPACK since the dominant cost is forming matvecs with $A$. Although, on the whole two-pass algorithm is more accurate than the single-pass algorithm, in this application it is more expensive than ARPACK. Therefore, if an accurate eigendecomposition is desired then ARPACK is recommended. 

In finely discretized problems with complicated geometries in 3D, factorizing or inverting the mass matrix $M$ that is required by both randomized algorithms and ARPACK prove to be expensive. In such cases, the calculations can be simplified by observing that $B^{-1}A = \prior M$ and as a result, there is no reason to invert $M$. This can be used to accelerate the randomized algorithms. The same trick can be used by Krylov subspace methods as well. The ultimate choice of algorithms would depend heavily on the architecture used, the specific problem and the desired accuracy.

\subsection{Parallel implementation}
In this section, we consider the parallel performance of the proposed algorithms for a large-scale KL expansion. The domain $x \in [0, 1]^3$ was discretized with uniformly distributed $N = 200^3 = 8,000,000$ grid points. The computations were performed  for $k = 120$ eigenmodes with an oversampling factor $p = 8$. Parallel execution times were measured on a Linux workstation equipped with Intel Xeon E5-2687W running at 3.1 GHz (16 cores) and 128 GB memory. MATLAB was used to test single pass (Algorithm~\ref{alg:doublepass}) and two pass (Algorithm~\ref{alg:singlepass}) algorithms. Since the domain under consideration is a rectangle and the covariance kernel is stationary, the resulting covariance matrix is a recursive block-Toeplitz matrix and the dense matrix-vector products involving the matrix $\prior$ were accelerated using FFT~\cite{ambikasaran2013fast}.

For the QR decomposition with weighted inner-products, we consider the `PreCholQR' (Algorithm~\ref{alg:precholqr}) instead of `MGS-R' (Algorithm~\ref{alg:mgs}). The reason for this is that, like Krylov subspace methods, MGS-R uses $W$-inner products in a sequential fashion. On the other hand, the computations in `PreCholQR' can be readily parallelized. The matvecs $B^{-1}A = \prior M$ are further parallelized further simply using MATLAB command `parfor'. This convenient parallel implementation underscores the coding efficiency and excellent scalability of the randomized algorithm since typical Krylov subspace methods have to execute matrix-vector multiplications sequentially. The matrix $M$ was constructed using FEniCS. Up to 16 processes were used for the tests and each test was executed 10 times to compute the average execution time.
\begin{figure}[!ht]
\centering
\includegraphics[scale = 0.3]{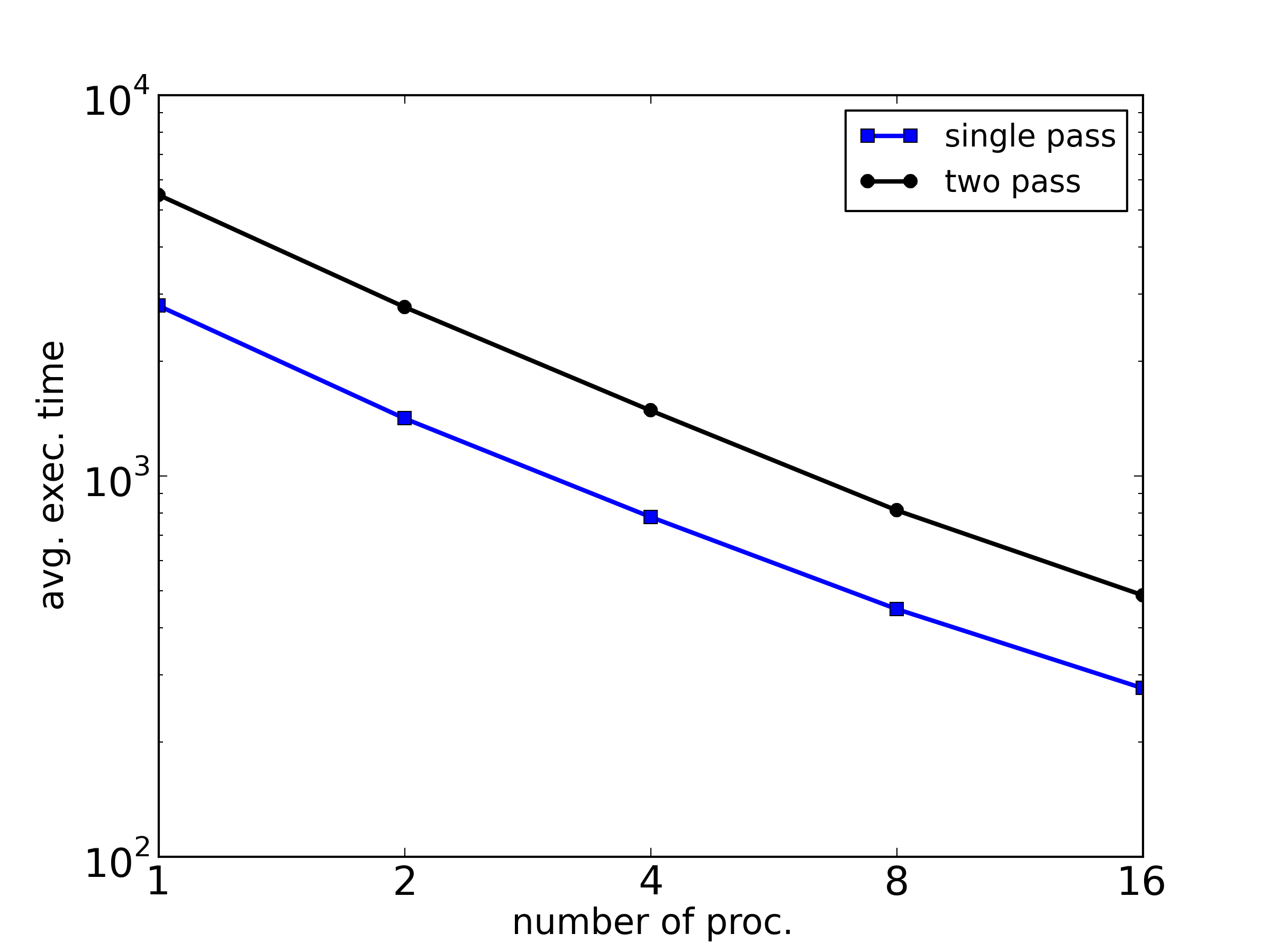}
\includegraphics[scale=0.45]{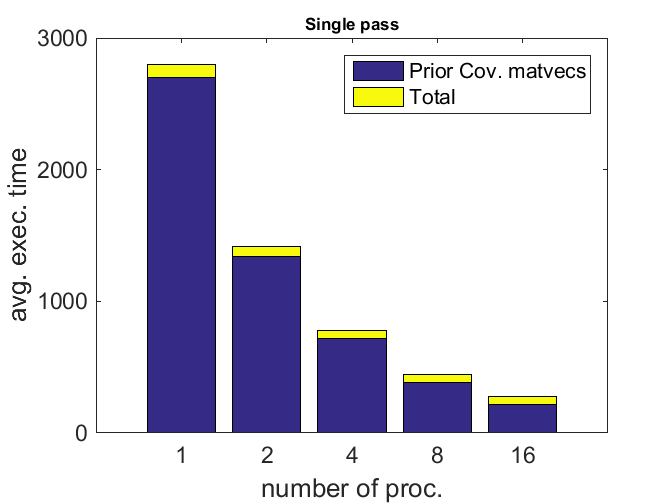}

\caption{(left) Performance results with 16 processes of randomized algorithms Two-pass  (Algorithm~\ref{alg:singlepass}) and Single pass (Algorithm~\ref{alg:doublepass}) for $k=120$ eigenvalues and oversampling factor $p = 20$. (right) Breakdown of costs of different parts of the algorithm, demonstrating the parts that are scalable. }
\label{fig:parallel}
\end{figure}


\begin{table}\centering
\begin{tabular}{|c|c|c|c|c|}
\hline
\multirow{2}{*}{np} & \multicolumn{2}{|c|}{N = 125,000} & \multicolumn{2}{|c|}{N = 1,000,000} \\
\cline{2-5}
    &  randomized & eigs & randomized & eigs \\
\hline
 1  & 20.15 &        & 199.64 &       \\
 2  & 10.63 &        & 102.53 &       \\
 4  &  6.99 &  30.98 \textsuperscript{*}  &  59.05 & 254.56 \textsuperscript{*}\\
 8  &  4.18 &        &  35.86 &       \\
 16 &  3.34 &        &  25.94 &       \\
\hline
\multicolumn{5}{l}{\textsuperscript{*}\footnotesize{multithreaded on 16 cores}}
\end{tabular}
 \caption{Comparison of computational times (in seconds) of the randomized algorithm ``Single Pass'' (Algorithm~\ref{alg:singlepass}) with MATLAB function ``eigs'' for $N = 50^3 = 125,000$ and $100^3 = 1,000,000$ with k = 120 eigenmodes and p = 8 oversampling factor.}
 \label{tab:parallel}
\end{table}

Figure~\ref{fig:parallel} shows the strong scaling for single pass and double pass algorithms. In these experiments, the dominant computation cost arose from matrix-vector products $B^{-1}A = \prior M$ and simple embarrassingly parallel implementation on this step could reduce the overall computation costs significantly without losing its scalability. The remaining steps with smaller computation costs were executed using built-in MATLAB functions (sparse matrix multiplication $M*x$, chol and eig), which took around a minute on a single core. While one might reduce the computation time further using sophisticated parallelization on the entire algorithm, the computation costs for these steps become negligible for large-scale truncated KL expansion problems. We expect similar scaling when applied to clusters with distributed memory system. Finally, we demonstrate significant performance gains over a Krylov subspace implemented on the same computational environment. Table~\ref{tab:parallel} shows the comparison of computational costs of Single pass algorithm (Algorithm~\ref{alg:singlepass}) applied to the GHEP $M\prior M x = \lambda M x$ with MATLAB function `eigs' applied to the matrix $\prior M$ for $N = 50^3 = 125,000$ and $100^3 = 1,000,000$ with $k = 120$ eigenmodes and p = 8 oversampling factor. As can be seen there is significant speed up in using the randomized approach even on a small problem size. It should be noted that the spectrum of the eigenvalue problem $\prior Mx = \lambda x $ is identical to the GHEP $M\prior M x = \lambda M x $, however the eigenvectors obtained using eigs are not $M$-orthonormal.

\section{Discussion and conclusions}
We have presented a few algorithms for computing the dominant eigenmodes of the generalized Hermitian eigenvalue problem $Ax =\lambda Bx$ using a randomized approach. The algorithms avoid the need to factorize $B$ (or form products with $B^{1/2}$ or its inverse). This is advantageous for certain classes of problems, where factorizing $B$ is computationally expensive. Instead, we provide a Hermitian low-rank decomposition by using $B$-inner products. We discussed various issues related to computational costs and factors controlling accuracy through an example application that involved computing the dominant eigenmodes of the Karhunen-Lo\`{e}ve expansion. Out of the two algorithms proposed - although single pass algorithms are faster (on account of using half the number of matvecs with $A$), the accuracy that it provides may not be satisfactory unless the eigenvalues decay very rapidly.

We conclude with an additional example application, in which we think randomized algorithms maybe computationally beneficial. Consider a linear inverse problem of estimating parameters $s\in\mathbb{R}^{n_s}$ from noisy measurements $y \in \mathbb{R}^{n_y}$ with $n_y \ll n_s$. Using a Bayesian approach to recover the unknowns from the measurements, often one has to solve the following regularized least-squares problem
\[\hat{s} =  \arg\min_s \quad \norm{y-Hs}{\noise^{-1}}^2 + \norm{s-\mu}{\prior^{-1}}^2  \]

In addition to computing the best estimate $\hat{s}$, we would like to derive an efficient representation for the posterior covariance matrix $\post \define (H^T\noise^{-1}H + \prior^{-1})^{-1}$ since this gives us insight about quantifying the predictive uncertainty. For examples, the diagonals of the posterior covariance matrix $\post$ is related to the variance of the estimate. As before, $\prior$ is approximated as a ${\cal{H}}$-matrix which can be used form fast products of the form $\prior x$ and $\prior^{-1}x$ (using a Krylov subspace method). Forming and storing the posterior covariance matrix entry wise using the formula $\left (\prior^{-1} + H^T\noise^{-1}H\right)^{-1}$ is still out of the question. We consider the generalized Hermitian eigenvalue problem
\begin{equation}\label{eqn:postghep} H^T\noise^{-1}Hu  = \lambda \prior^{-1}u\end{equation}
Using any of the randomized algorithms described previously, we get the decomposition
\[ H_\text{data} \define  H^T\noise^{-1}H \quad \approx \quad  \prior^{-1}U_k \Lambda_k U_k^T \prior^{-1} \]
where the columns of the matrix $U$ are the generalized eigenvectors and $\Lambda_k$ is a diagonal matrix with entries as the generalized eigenvalues. Plugging this decomposition into the expression for $\post$, and applying the Woodbury identity
\[ \post = ( \prior^{-1}U \Lambda U^T \prior^{-1} + \prior^{-1})^{-1} = \prior - U_kD_kU_k^T + \bigO \left(\frac{\lambda_{k+1}}{1+\lambda_{k+1}} \right)\]
where, $D_k \define \text{diag}(\frac{\lambda_i}{1+\lambda_i})$. For several inverse problems the eigenvalues of the eigenproblem~\eqref{eqn:postghep} decay rapidly so that the low-rank approximation can be truncated for small $k$ resulting in an efficient representation of the posterior covariance matrix. We will discuss this application in an upcoming paper~\cite{saibaba2013uncertainty}.

\bibliographystyle{plain}
\bibliography{randomized}

\begin{thebibliography}{10}

\bibitem{ambikasaran2012large}
S.~Ambikasaran, J.Y. Li, P.K. Kitanidis, and E.F. Darve.
\newblock Large-scale stochastic linear inversion using hierarchical matrices.
\newblock under review, 2012.

\bibitem{ambikasaran2013fast}
Sivaram Ambikasaran, Arvind~K Saibaba, Eric~F Darve, and Peter~K Kitanidis.
\newblock Fast algorithms for bayesian inversion.
\newblock In {\em Computational Challenges in the Geosciences}, pages 101--142.
  Springer New York, 2013.

\bibitem{bai1987templates}
Zhaojun Bai, James Demmel, Jack Dongarra, Axel Ruhe, and Henk Van Der~Vorst.
\newblock {\em Templates for the solution of algebraic eigenvalue problems: a
  practical guide}, volume~11.
\newblock Society for Industrial and Applied Mathematics, 1987.

\bibitem{borm2003introduction}
S.~B{\"o}rm, L.~Grasedyck, and W.~Hackbusch.
\newblock {Introduction to {H}ierarchical matrices with applications}.
\newblock {\em Engineering Analysis with Boundary Elements}, 27(5):405--422,
  2003.

\bibitem{bui2012extreme}
Tan Bui-Thanh, Carsten Burstedde, Omar Ghattas, James Martin, Georg Stadler,
  and Lucas~C Wilcox.
\newblock Extreme-scale {UQ} for {B}ayesian inverse problems governed by
  {PDEs}.
\newblock In {\em Proceedings of the International Conference on High
  Performance Computing, Networking, Storage and Analysis}, page~3. IEEE
  Computer Society Press, 2012.

\bibitem{cliffe2011multilevel}
KA~Cliffe, MB~Giles, Robert Scheichl, and Aretha~L Teckentrup.
\newblock Multilevel monte carlo methods and applications to elliptic pdes with
  random coefficients.
\newblock {\em Computing and Visualization in Science}, 14(1):3--15, 2011.

\bibitem{eiermann2007computational}
Michael Eiermann, Oliver~G Ernst, and Elisabeth Ullmann.
\newblock Computational aspects of the stochastic finite element method.
\newblock {\em Computing and visualization in science}, 10(1):3--15, 2007.

\bibitem{gander1980algorithms}
Walter Gander.
\newblock Algorithms for the {QR} decomposition.
\newblock {\em Res. Rep}, 80(02), 1980.

\bibitem{ghanem2003stochastic}
Roger Ghanem and Pol~D Spanos.
\newblock {\em Stochastic finite elements: a spectral approach}.
\newblock DoverPublications. com, 2003.

\bibitem{ghanem1991stochastic}
Roger~G Ghanem and Pol~D Spanos.
\newblock {\em Stochastic finite elements: a spectral approach}, volume~41.
\newblock Springer, 1991.

\bibitem{golub2012matrix}
Gene~H Golub and Charles~F Van~Loan.
\newblock {\em Matrix computations}, volume~3.
\newblock JHU Press, 2012.

\bibitem{grimes1994shifted}
Roger~G Grimes, John~G Lewis, and Horst~D Simon.
\newblock A shifted block {L}anczos algorithm for solving sparse symmetric
  generalized eigenproblems.
\newblock {\em SIAM Journal on Matrix Analysis and Applications},
  15(1):228--272, 1994.

\bibitem{halko2011finding}
N.~Halko, P.G. Martinsson, and J.A. Tropp.
\newblock Finding structure with randomness: {P}robabilistic algorithms for
  constructing approximate matrix decompositions.
\newblock {\em SIAM review}, 53(2):217--288, 2011.

\bibitem{str-6}
V.~Hernandez, J.~E. Roman, A.~Tomas, and V.~Vidal.
\newblock A survey of software for sparse eigenvalue problems.
\newblock Technical Report STR-6, Universitat Polit\`ecnica de Val\`encia,
  2009.
\newblock Available at http://www.grycap.upv.es/slepc.

\bibitem{khoromskij2009application}
BN~Khoromskij, A.~Litvinenko, and HG~Matthies.
\newblock Application of {H}ierarchical matrices for computing the
  karhunen--lo{\`e}ve expansion.
\newblock {\em Computing}, 84(1):49--67, 2009.

\bibitem{liberty2007randomized}
E.~Liberty, F.~Woolfe, P.G. Martinsson, V.~Rokhlin, and M.~Tygert.
\newblock Randomized algorithms for the low-rank approximation of matrices.
\newblock {\em Proceedings of the National Academy of Sciences},
  104(51):20167--20172, 2007.

\bibitem{LoggWells2010a}
Anders Logg and Garth~N. Wells.
\newblock {DOLFIN}: Automated finite element computing.
\newblock {\em ACM Transactions on Mathematical Software}, 37(2), 2010.

\bibitem{martinsson2011randomized}
Per-Gunnar Martinsson, Vladimir Rokhlin, and Mark Tygert.
\newblock A randomized algorithm for the decomposition of matrices.
\newblock {\em Applied and Computational Harmonic Analysis}, 30(1):47--68,
  2011.

\bibitem{meerbergen1998theoretical}
Karl Meerbergen.
\newblock A theoretical comparison between inner products in the shift-invert
  {A}rnoldi method and the spectral transformation {L}anczos method.
\newblock {\em Electronic Transactions on Numerical Analysis}, 7:90--103, 1998.

\bibitem{saad1992numerical}
Youcef Saad.
\newblock {\em Numerical methods for large eigenvalue problems}, volume 158.
\newblock SIAM, 1992.

\bibitem{saibaba2012application}
AK~Saibaba, S~Ambikasaran, J~Yue~Li, PK~Kitanidis, and EF~Darve.
\newblock Application of {H}ierarchical matrices to linear inverse problems in
  geostatistics.
\newblock {\em Oil and Gas Science and Technology-Revue de l'IFP-Institut
  Francais du Petrole}, 67(5):857, 2012.

\bibitem{saibaba2013uncertainty}
A.K. Saibaba and P.K. Kitanidis.
\newblock Uncertainty quantification in the geostatistical approach to solving
  inverse problems.
\newblock {\em Submitted}.

\bibitem{saibaba2012efficient}
A.K. Saibaba and P.K. Kitanidis.
\newblock Efficient methods for large-scale linear inversion using a
  geostatistical approach.
\newblock {\em Water Resources Research}, 48(5):W05522, 2012.

\bibitem{schwab2006karhunen}
C.~Schwab and R.A. Todor.
\newblock Karhunen-loeve approximation of random fields by generalized fast
  multipole methods.
\newblock {\em Journal of Computational Physics}, 217(1):100--122, 2006.

\bibitem{van1976generalizing}
Charles~F Van~Loan.
\newblock Generalizing the singular value decomposition.
\newblock {\em SIAM Journal on Numerical Analysis}, 13(1):76--83, 1976.

\end{thebibliography}

\section{Appendix: Error estimation}
In this Section, we derive a probabilistic error for the low-rank approximation described in Theorem~\ref{thm:main}. The proof follows the arguments of~\cite{martinsson2011randomized,liberty2007randomized} closely and uses several key results of~\cite{halko2011finding}.

\begin{proof}
First, we derive a deterministic bound for $\normB{(I-QQ^*B)B^{-1}A}$. It can be shown that there exists a matrix $F$ such that
\[ \normB{(I-QQ^*B)C} \leq 2\normB{C - C\Omega F } + 2\normB{C\Omega G - QRG}\]
The proof of the above inequality follows~\cite{martinsson2011randomized,liberty2007randomized}
If we choose $Q$ and $R$ such that $C\Omega = QR$, the second term drops out. Such a $Q$ and $R$ can be constructed using Algorithm~\ref{alg:mgs}. Now, we show that for any matrix $C\define B^{-1}A$	and $\Omega$ with i.i.d. entries chosen from a  Gaussian distribution with zero mean and unit variance, there exists a matrix $G$ such that $C\Omega F$ is a good approximation to $C$ in B-norm. In fact, we show by construction that such an $F$ exists.

We denote the Generalized SVD of $C = U\begin{pmatrix}\Sigma_{B,1} & \\ & \Sigma_{B,2} \end{pmatrix}V^*$, where $\Sigma_{B,1}$ contain the $k$ largest singular values of $C$ in the generalized sense. For convenience, henceforth we drop the subscript $B$ on the singular values. Then,

\[  C\Omega G = U \begin{pmatrix}\Sigma_1 & \\ & \Sigma_2 \end{pmatrix} \begin{pmatrix}\Omega_1 \\ \Omega_2\end{pmatrix} F \]
where, we have $V^*\Omega = \begin{pmatrix}\Omega_1 \\ \Omega_2\end{pmatrix}$ is also a Gaussian random matrix, because they are invariant under rotation. Here $\Omega_1$ is $k\times (k+p)$ and $\Omega_2 $ is $(n-k)\times (k+p)$. Now, we choose $G\define [\Omega_1^ \dagger \quad 0 ]V^*$ so that
\begin{align*}
  C\Omega G & = \quad U \begin{pmatrix}\Sigma_1 & \\ & \Sigma_2 \end{pmatrix} \begin{pmatrix}\Omega_1 \\ \Omega_2\end{pmatrix} G \\ \nonumber
 & = U \quad \begin{pmatrix}\Sigma_1 & \\ & \Sigma_2 \end{pmatrix} \begin{pmatrix}\Omega_1 \\ \Omega_2\end{pmatrix} [\Omega_1^ \dagger \quad 0 ]V^* \\ \nonumber
 & = \quad U \begin{pmatrix}\Sigma_1 & \\ & \Sigma_2 \end{pmatrix}  \begin{pmatrix} I & 0\\ \Omega_2\Omega_1^\dagger & 0 \end{pmatrix} V^* \quad = \quad  U \begin{pmatrix} \Sigma_1 & 0\\\Sigma_2\Omega_2\Omega_1^\dagger &  0 \end{pmatrix} V^* \\ \nonumber
\end{align*}

Then, $ C - C\Omega G = U \begin{pmatrix} 0 & 0 \\ -\Sigma_2\Omega_2\Omega_1^\dagger & \Sigma_2 \end{pmatrix} V^*$
and applying matrix-norm inequalities (see Proposition~\ref{prop:random}), we have
\begin{align*}
 \normB{C-C\Omega G}^2 \quad  \leq & \quad  \normtwo{B^{-1}}\lVert B^{1/2}U \begin{pmatrix} 0 & 0 \\ -\Sigma_2\Omega_2\Omega_1^\dagger & \Sigma_2 \end{pmatrix} V^* \rVert^2  \\ \nonumber
\leq & \quad  \normtwo{B^{-1}}\normtwo{B^{1/2}U}^2\normtwo{V^*}^2 \left( \normtwo{\Sigma_2}^2 + \normtwo{\Sigma_2\Omega_2\Omega_1^\dagger}^2 \right) \\ \nonumber
=& \quad  \normtwo{B^{-1}}\left(\normtwo{\Sigma_2}^2  + \normtwo{\Sigma_2\Omega_2\Omega_1^\dagger}^2 \right) \\ \nonumber
\end{align*}
However, $\normtwo{\Sigma_2} = \sigma_{B,k+1}$. Now, applying the result in~\cite[Theorem 10.6]{halko2011finding} we get the desired result.

\end{proof}

\end{document}